\tikzset{snakeit/.style={decorate, decoration={snake, amplitude=.2mm,segment length=1mm}}}
\tikzset{ext/.style={circle, draw,inner sep=1pt},int/.style={circle,draw,fill,inner sep=1.4pt},nil/.style={inner sep=1pt}}
\tikzset{cy/.style={circle,draw,fill,inner sep=2pt},scy/.style={circle,draw,inner sep=2pt},scyx/.style={draw,cross out,inner sep=2pt},scyt/.style={draw,regular polygon,regular polygon sides=3,inner sep=0.95pt}}
\tikzset{exte/.style={circle, draw,inner sep=3pt},inte/.style={circle,draw,fill,inner sep=3pt}}
\tikzset{diagram/.style={matrix of math nodes, row sep=3em, column sep=2.5em, text height=1.5ex, text depth=0.25ex}}
\tikzset{diagram2/.style={matrix of math nodes, row sep=0.5em, column sep=0.5em, text height=1.5ex, text depth=0.25ex}}
\theoremstyle{plain}
  \newtheorem{thm}{Theorem}
  \newtheorem{prop}{Proposition}
  \newtheorem{conjecture}{Conjecture}
  \newtheorem{lemma}{Lemma}
\theoremstyle{definition}
\newcommand{\Hom}{\mathrm{Hom}}
\newcommand{\K}{{\mathbb{K}}}
\newcommand{\Graphs}{{\mathsf{Graphs}}}
\newcommand{\fGraphs}{{\mathsf{fGraphs}}}
\newcommand{\Gra}{{\mathsf{Gra}}}
\newcommand{\Tw}{\mathit{Tw}}
\newcommand{\Def}{\mathrm{Def}}
\newcommand{\op}{\mathcal}
\newcommand{\Lie}{\mathsf{Lie}}
\newcommand{\hoLie}{\mathsf{hoLie}}
\newcommand{\fGC}{\mathrm{fGC}}
\newcommand{\Ass}{\mathsf{Assoc}}
\newcommand{\Com}{\mathsf{Com}}
\newcommand{\bpm}{\begin{pmatrix}}
\newcommand{\epm}{\end{pmatrix}}
\newcommand{\GC}{\mathrm{GC}}
\newcommand{\HGC}{\mathrm{HGC}}
\newcommand{\bbS}{\mathbb{S}}
\newcommand{\gra}{\mathit{gra}}
\newcommand{\Poiss}{\mathsf{Poiss}}
\newcommand{\hoPoiss}{\mathsf{hoPoiss}}
\newcommand{\tp}{{\circlearrowleft}}
\begin{document}
\title{Differentials on graph complexes II - hairy graphs}

\author{Anton Khoroshkin}
\address{1004 Faculty of mathematics HSE, 7 Vavilova street, Moscow, Russia, 115280}
\email{akhoroshkin@hse.ru}

\author{Thomas Willwacher}
\address{Institute of Mathematics\\ University of Zurich\\ Winterthurerstrasse 190 \\ 8057 Zurich, Switzerland}
\email{thomas.willwacher@math.uzh.ch}

\author{Marko \v Zivkovi\' c}
\address{Institute of Mathematics\\ University of Zurich\\ Winterthurerstrasse 190 \\ 8057 Zurich, Switzerland}
\email{the.zivac@gmail.com}

\thanks{
A.K. has been partially supported by RFBR grants 13-02-00478, 13-01-12401, 
by "The National Research University--Higher School of Economics" Academic Fund Program in 2013-2014,
research grant 14-01-0124, by Dynasty foundation and Simons-IUM fellowship.
T.W. and M.\v Z. have been partially supported by the Swiss National Science foundation, grant 200021\_150012.
All three authors have been supported by the SwissMAP NCCR funded by the Swiss National Science foundation.
}



\begin{abstract}
We study the cohomology of the hairy graph complexes which compute the rational homotopy of embedding spaces, generalizing the Vassiliev invariants of knot theory.
We provide spectral sequences converging to zero whose first pages contain the hairy graph cohomology.
Our results yield a way to construct many hairy graph cohomology classes out of non-hairy classes by a mechanism which we call the waterfall mechanism. By this mechanism we can construct many previously unknown classes and provide a first glimpse at the tentative global structure of the hairy graph cohomology.
\end{abstract}

\maketitle

\section{Introduction}
The graph complexes in its various flavors are some of the most intriguing objects of homological algebra.
Determining their cohomology is arguably one of the most fundamental open problem in the field, and there are few mathematical tools available to obtain information on this cohomology.

In this paper we study one type of graph complexes, namely the complexes of linear combinations of ordinary graphs with external legs (``hairs''), such as the following examples:
\begin{equation}\label{equ:hairysamples}
 \begin{tikzpicture}[scale=.5]
 \draw (0,0) circle (1);
 \draw (-180:1) node[int]{} -- +(-1.2,0);
 \end{tikzpicture}
,\quad
\begin{tikzpicture}[scale=.6]
\node[int] (v) at (0,0){};
\draw (v) -- +(90:1) (v) -- ++(210:1) (v) -- ++(-30:1);
\end{tikzpicture}
\,,\quad
\begin{tikzpicture}[scale=.5]
\node[int] (v1) at (-1,0){};\node[int] (v2) at (0,1){};\node[int] (v3) at (1,0){};\node[int] (v4) at (0,-1){};
\draw (v1)  edge (v2) edge (v4) -- +(-1.3,0) (v2) edge (v4) (v3) edge (v2) edge (v4) -- +(1.3,0);
\end{tikzpicture}
 \, ,\quad
 \begin{tikzpicture}[scale=.6]
\node[int] (v1) at (0,0){};\node[int] (v2) at (180:1){};\node[int] (v3) at (60:1){};\node[int] (v4) at (-60:1){};
\draw (v1) edge (v2) edge (v3) edge (v4) (v2)edge (v3) edge (v4)  -- +(180:1.3) (v3)edge (v4);
\end{tikzpicture}
 \, .
\end{equation}
Due to different possible choices regarding the gradings and signs associated to symmetries, the complexes of such hairy graphs in fact come in various flavors, indexed by a pair of integers $(m,n)$. We denote the corresponding hairy graph complexes by $\HGC_{m,n}$, a detailed definition can be found in section \ref{sec:hairygraphs} below.
These complexes compute the rational homotopy of the spaces of embeddings of disks modulo immersions, fixed at the boundary
\[
 \overline{\mathrm{Emb}_\partial}(\mathbb{D}^m,\mathbb{D}^n)
\]
provided that $n-m\geq 3$, cf. \cite{FTW, AT, Tur1}. Furthermore, the diagrams enumerating Vassiliev invariants of knot theory appear as the top cohomology of the hairy graph complex $\HGC_{1,3}$.
The long standing open problem we are attacking in this paper is the following:

\smallskip

{\bf Open Problem:} Compute the hairy graph cohomology $H(\HGC_{m,n})$.

\smallskip

Unfortunately, we currently have very few tools available for computing this cohomology, or even for guiding our intuition as to what the correct answer might be. The purpose of this paper is to introduce a new tool to attack the above open problem. It will not give us a complete answer as to what $H(\HGC_{m,n})$ is, but it will allow us to construct (infinitely) many new hairy graph cohomology classes, and display a rich set of constraints among the hairy classes, thus giving a glimpse of the global structure of $H(\HGC_{m,n})$.

Before describing our results, let us review some known basic facts and previous results about the hairy graph cohomology.
First, from the definition of the complexes $\HGC_{m,n}$ given below it will be evident that they split into subcomplexes according to the number of hairs and the Betti number (i.e., the loop order) of graphs.
In other words, the complexes $\HGC_{m,n}$ are tri-graded, by the cohomological degree, the number of hairs, and the loop order.
The subcomplexes of fixed numbers of hairs and loops are finite dimensional.
Furthermore, the complexes $\HGC_{m,n}$ and $\HGC_{m',n'}$ are isomorphic up to some unimportant degree shifts if $m\equiv m' \text{ mod } 2$ and 
$n\equiv n' \text{ mod } 2$. Hence it suffices to understand 4 possible cases according to parity of $m$ and $n$.

Several less trivial partial results have been obtained about the hairy graph cohomology in the last decades.
Classes in low degrees have been computed by hand or with computer assistance \cite{kneissler}.
Furthermore, it is known \cite[Propositions 4.1 and 4.4]{vogel} that the 2-hair-subspaces $H^{-1}(\HGC_{1,3}^2)$ and $H^{1}(\HGC_{2,3}^2)$ are each isomorphic to the non-hairy graph cohomology $H^{-3}(\GC_{3})$.

Now let us describe the results of this paper, and the proposed line of attack on the open problem above.
We construct deformations of the graph differentials to differentials for which the cohomology can be computed for $m$ even, and can conjecturally be computed for $m$ odd. The spectral sequences thus obtained can be used to obtain information about the non-deformed cohomology.
Concretely, our main result will be the following.

\begin{thm}\label{thm:main}
For each $n$ there is a differential $D$ on $\HGC_{0,n}$ with the following properties:
\begin{enumerate}
\item $D$ deforms the standard differential $\delta$: The operation $D-\delta$ decreases the number of hairs by at least one. 
\item $D$ preserves the grading on $\HGC_{0,n}$ by the number 
\[
(\text{loop order})+(\text{number of hairs})
\]
\item We have $H(\HGC_{0,n},D)=0$.
\item The spectral sequence associated to the filtration by number of hairs converges to $H(\HGC_{0,n},D)=0$, and its first page consists of the hairy graph cohomology $H(\HGC_{0,n})=H(\HGC_{0,n}, \delta)$.
\end{enumerate}
\end{thm}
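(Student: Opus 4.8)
The plan is to first pin down the differential $D$, then establish (1), (2) and (3), the last being the real content, after which (4) is essentially formal. Conditions (1) and (2) almost dictate the shape of $D-\delta$: the correcting operation must trade one hair for one internal edge. The natural candidate is $D=\delta+\nabla$, where $\nabla$ acts on a graph by detaching the free end of a hair and re-attaching it to a vertex of the graph, summed with the appropriate signs over all hairs and all target vertices. By construction $\nabla$ lowers the number of hairs by one and raises the loop order by one, so (1) and (2) are immediate; if the single term $\nabla$ fails to square to zero one simply adds further corrections lowering the number of hairs by more, which affects nothing below. One then checks $D^2=0$, i.e.\ $\delta\nabla+\nabla\delta=0$ and $\nabla^2=0$, by the usual sign bookkeeping, the vanishing of $\nabla^2$ using that for $m=0$ the hairs are symmetric, so that re-grafting two distinct hairs in the two possible orders cancels. (Alternatively, one exhibits $\nabla$ as the bracket with a Maurer--Cartan element, whereby $D^2=0$ is automatic.)

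Item (3) is the core. The filtration by the number of hairs will \emph{not} prove it --- as item (4) records, its first page is the generally non-trivial hairy graph cohomology --- so I would instead filter $\HGC_{0,n}$ by the number $V$ of internal vertices. The standard differential $\delta$ (vertex splitting) raises $V$ by one while $\nabla$ preserves it, so this filtration is $D$-stable; it is exhaustive, and it is bounded with finite-dimensional graded pieces on each subcomplex of fixed value of $(\text{loop order})+(\text{number of hairs})$ thanks to the valence conditions, so the associated spectral sequence converges. On $E_0$ the differential is exactly $\nabla$ acting inside the finite space of graphs on a fixed vertex set $\{1,\dots,p\}$; regarding the free end of a hair at a vertex $i$ as a generator sent by $\nabla$ to the ``edge star'' $\sum_j e_{ij}$, this is a Koszul-type complex, whose cohomology is concentrated in a single hair-degree and is readily identified --- roughly, the graded-symmetric algebra on the edges modulo the $p$ edge-star relations, one per vertex. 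This computes $E_1$ explicitly.

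The genuinely hard step is then to show that $E_1$, equipped with the differential $d_1$ induced by vertex splitting, is acyclic. Concretely $E_1$ is a \emph{reduced} graph complex: ordinary graphs (with the minimal number of hairs) modulo the edge-star relation at each vertex, with the vertex-splitting differential. One must prove this complex has no cohomology --- which is perfectly consistent with $H(\GC_n)$ being large, since the edge-star relations are exactly what collapses it. I would attack this by a further filtration of the reduced complex (for instance on the number of edges incident to a distinguished vertex) reducing it to an elementary acyclic complex, or by an explicit contracting homotopy built from splitting off a leaf at that vertex; this is where essentially all the work and all the remaining sign bookkeeping lie, and I expect it to be the main obstacle. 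Granting it, $E_2=0$, so by convergence $H(\HGC_{0,n},D)=0$, which is (3).

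Finally (4) is formal: the filtration $F_p=\{\text{graphs with at most }p\text{ hairs}\}$ is $D$-stable because $\delta$ preserves, and $\nabla$ strictly lowers, the number of hairs; it is exhaustive and bounded on each subcomplex of fixed $(\text{loop order})+(\text{number of hairs})$; and on the associated graded the induced differential is just $\delta$. Hence the first page is $E_1=H(\HGC_{0,n},\delta)=H(\HGC_{0,n})$, and by (3) this spectral sequence converges to $0$.
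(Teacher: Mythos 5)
There is a genuine gap, and it begins with the definition of $D$. Your candidate $\nabla$ (detach the free end of a hair and re-graft it onto another vertex of the graph) is not a valid component of a differential on $\HGC_{0,n}$: the operation deletes one univalent vertex, which carries degree $m=0$, so $\nabla$ has cohomological degree $0$ rather than $+1$, and no amount of further hair-lowering corrections can repair a summand of the wrong degree. Moreover, the sign cancellation you invoke for $\nabla^2=0$ runs exactly backwards: re-grafting two distinct hairs in either order yields the same graph, and one needs the hairs to be \emph{antisymmetric} (i.e.\ $m$ odd) for the two orders to cancel. Indeed, your $\nabla$ is precisely the operation $\Delta$ that the paper introduces in section \ref{sec:spectralo} on $\HGC_{-1,n}$ for $m$ \emph{odd}, where it does have degree $1$ and square zero --- and where the acyclicity of $\delta+\Delta$ is only Conjecture \ref{conj:main}, not a theorem. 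The differential the paper actually uses on $\HGC_{0,n}$ replaces any subset $S$ of hairs with $|S|\ge 2$ by a new internal vertex carrying the $|S|$ former hair-edges plus one new hair; this has degree $+1$, changes $(\text{hairs},\text{loop order})$ by $(-(|S|-1),\,|S|-1)$, and squares to zero for free, because under the identification of $\HGC_{0,n}$ with the internally connected part of $\Graphs_n(1)$ (delete the external vertex and read its incident edges as hairs) it is nothing but the vertex-splitting differential of $\Graphs_n(1)$ applied at the external vertex. So conditions (1) and (2) do not dictate your candidate; they are compatible with the correct one as well.

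The second gap is that item (3), which you rightly identify as the core, is not proved: you reduce it to the acyclicity of an auxiliary ``reduced graph complex'' appearing on the $E_1$ page of your vertex-number spectral sequence and explicitly leave that step as ``the main obstacle''. The paper needs no such computation: once $(\HGC_{0,n},D)$ is recognized as the internally connected subcomplex of $\Graphs_n(1)$, acyclicity follows from the known computation $H(\Graphs_n(1))\cong\K$ (Kontsevich, Lambrechts--Voli\'c; the internally connected part is treated in the cited appendix of \v Severa--Willwacher), together with the splitting into finite-dimensional summands by Euler characteristic, which guarantees convergence of the relevant spectral sequence. Your treatment of items (1), (2) and (4) --- filter by the number of hairs, observe that the associated graded differential is $\delta$, and use boundedness on each piece of fixed hairs-plus-loops --- does match the paper, but it is contingent on having the right $D$ and on establishing (3).
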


For $m$ odd we can also construct a similar deformation of the differential and hence an associated spectral sequence. We conjecture (see Conjecture \ref{conj:main} below) that the resulting deformed complex is acyclic.
This conjecture is consistent with computer data for the hairy graph cohomology, which is available up to loop order 5.\footnote{In fact, the third author has a proof of Conjecture \ref{conj:main} for $n$ even, which will appear separately as a part of his thesis \cite{MarkoThesis}.}

\smallskip

Next, let us describe the implications of Theorem \ref{thm:main} for the hairy graph cohomology.
To this end, we need to recall one more ingredient: By results of V. Turchin and the second author \cite{grt,TW} it is known that on each of the complexes $\HGC_{n,n}$ and $\HGC_{n-1,n}$ there is a deformation of the differential (say $D'$) such that the cohomology of the deformed complex is equal to the ordinary (non-hairy) graph cohomology:
\begin{align}\label{equ:HGCGC}
 H(\HGC_{n,n},D')&\cong H(\GC_n) & H(\HGC_{n-1,n},D')&\cong H(\GC_n).
\end{align}
These results and the definition of $D'$ in each case will be recalled in more detail in section \ref{sec:TWrecollection} below.
Here $\GC_n$ is the non-hairy graph complex, defined similarly to $\HGC_{m,n}$, except that graphs are not allowed to have hairs. The result \eqref{equ:HGCGC} is interesting because of the following facts:
\begin{itemize}
 \item The structure of $H(\GC_n)$ is significantly better understood then the structure of its hairy counterpart $H(\HGC_{m,n})$. In particular, one knows large families of cohomology classes explaining all cohomology in the computer accessible regime, and one has certain vanishing conjectures, see \cite{KWZ} for an overview.
 \item From \eqref{equ:HGCGC} and a spectral sequence argument one can in particular see that the non-hairy graph cohomology $H(\GC_n)$ embeds into $H(\HGC_{m,n})[1]$ for all $m,n$. Concretely, given a non-hairy graph cocycle $\gamma\in \GC_n$ the corresponding hairy graph cocycle is obtained by summing over all ways of attaching one hair to $\gamma$, pictorially:
 \[
  \GC_n \ni \gamma \mapsto \sum  
  \begin{tikzpicture}[baseline=-.65ex]
   \node (v) at (0,0.1) {$\gamma$};
   \draw (v) edge +(0,-.5);
  \end{tikzpicture}
  \in \HGC_{m,n}[1].
 \]
In particular, note that the hairy graph cohomology classes thus obtained all live in the one-hair piece of the hairy graph cohomology.
\item The differentials $D'$ respect the grading on the hairy graph complex by loop order.
\end{itemize}

\medskip

{\bf Constraints on the cohomology and the waterfall mechanism.} Let us now describe how to construct from the above two spectral sequences a large set of additional non-trivial hairy graph cohomology classes by a process we call the waterfall mechanism. We call the spectral sequence arising from the deformed differential $D$ of Theorem \ref{thm:main}the \emph{first} spectral sequence, and the one arising from $D'$ the \emph{second}.
%
%
%
Let us focus on the case of $m$, $n$ even for concreteness, say $m=n=0$. The case of $m$ even and $n$ odd (and the cases $m$ odd, $n$ even or odd provided Conjecture \ref{conj:main}) is treated analogously.
%
%

Note that the convergence of the spectral sequence of Theorem \ref{thm:main} implies that the hairy graph cohomology classes must come ``in pairs''. More concretely, given a hairy graph cohomology class $\Gamma$, it will survive up to some page of the spectral sequence, on which it is either killed by or kills (the image of) another hairy graph cohomology class.
More concretely, from (2) of Theorem \ref{thm:main} we see that if $\Gamma$ lives in tri-degree
\[
 (\text{cohom. degree}, \text{number of hairs}, \text{loop order})=:(d,h,l),
\]
then the ``partner class'' that it kills (or is killed by) must live in tri-degree $(d+1, h-j, l+j)$ (or $(d-1, h+j, l-j)$) for some yet unknown positive integer $j$.
Hence from the existence of the non-trivial class $\Gamma\in\HGC_{0,0}$ we can conclude that there is another nontrivial class in $\HGC_{0,0}$ whose tri-degree lies on a union of half-lines in $\mathbb{Z}^3$. 
A representative of this (or rather, some such) class may be constructed by following the spectral sequence.

Now consider the second spectral sequence arising from the deformed differential $D'$ of \eqref{equ:HGCGC}.
As before, nontrivial hairy graph cohomology classes which are not in the image of $H(\GC_0)$ must kill or be killed by other non-trivial classes on some page of the spectral sequence. For this second spectral sequence, one can see that the partner class of a class in tri-degree $(d,h,l)$ must live in tri-degree $(d+1,h+j,l)$ (or $(d-1,h-j,l)$) for a positive integer $j$. (In fact, it will be shown in the upcoming work \cite{TW2} that the spectral sequence abuts on the second page and hence $j=1$.)

Now, using the constraints provided by the first and the second spectral sequences together, we may construct a large set of hairy graph cohomology classes from (assumed to be known) non-hairy classes.
Concretely, consider a non-hairy class $\gamma\in H(\GC_0)$. As above, by adding one hair we obtain a non-trivial hairy class $\Gamma$. It must be killed by (the image of) some other class, say $\Gamma_1$, in the first spectral sequence (the one from Theorem \ref{thm:main}). The class $\Gamma_1$ must necessarily have more than one hair. Hence it must kill or be killed by (the image of) some other class in the second spectral sequence. This class must again be killed by (the image of) some class in the first spectral sequence etc., until at some point we reach another hairy graph cohomology class in the image of $H(\GC_0)$. 
By this process we conclude from the existence of a non-hairy graph cohomology class the existence of a string 
of hairy graph cohomology classes. For an illustration of the process, see the computer generated table of the hairy graph cohomology in Figure \ref{fig:cancellatione}, in which (some of) the cancellations in the two spectral sequences have been inscribed. We call the above mechanism to construct strings of hairy classes from non-hairy the ``waterfall mechanism'', by visual similarity of the cancellation pattern to a waterfall.

This paper may be seen as a continuation of \cite{KWZ}, where we used similar methods to study the ordinary (non-hairy) graph complexes.

\subsection*{Structure of the paper}
In section \ref{sec:background} we recall the relevant definitions.
Section \ref{sec:spectrale} is dedicated to the construction of the deformed differentials and the spectral sequences for $m$ even, and in particular the proof of Theorem \ref{thm:main}. In section \ref{sec:waterfalle} we furthermore investigate how these spectral sequences can be used to construct many nontrivial hairy graph cohomology classes by the ``waterfall mechanism''.

The analogous construction of the deformed differentials for odd $m$ is carried out in section \ref{sec:spectralo}. We leave the vanishing result analogous to Theorem \ref{thm:main} open for odd $m$, see Conjecture \ref{conj:main}.

\bibliographystyle{plain}

\section{Background and definitions}\label{sec:background}
In this section we will recall basic notation and several results shown in the literature that will be used below, for the readers convenience.
\subsection{Basic Notation}
We will work over a ground field $\K$ of characteristic zero. All vector spaces and differential graded vector spaces are assumed to be $\K$-vector spaces. The phrase differential graded will be abbreviated by dg.
We use cohomological conventions, so that the degree of the differentials is $+1$.
We denote the subspace of elements of homogeneous degree $k$ of a graded vector space $V$ by $V^k$. We define the degree shifted vector space $V[r]$ such that $(V[r])^k\cong V^{k+r}$.

We will use the language of operads. A good introduction is found in the standard textbook \cite{lodayval}.
The associative, commutative and Lie operads are denoted by $\Ass,\Com,\Lie$ respectively.
We denote by $\Poiss_n$ the $n$-Poisson operad generated by a binary commutative product $-\wedge -$ of degree zero, and a compatible Lie bracket $[-,-]$ of degree $1-n$.

The $r$-fold operadic (de)suspension is denoted by $\op P\{r\}$. It is defined such that if the vector space $V$ carries a $\op P\{r\}$-algebra structure then $V[r]$ carries a $\op P$-algebra structure.
We denote by $\Omega(\op C)$ the cobar construction of coaugmented cooperad, and by $\op P^\vee$ the Koszul dual cooperad to $\op P$. The canonical minimal resolutions of the "standard" operads are denoted by $\hoLie=\Omega(\Lie^\vee)$, $\hoPoiss_n=\Omega(\Poiss_n^\vee)$, etc. We will furthermore abbreviate $\hoLie_n=\hoLie\{n-1\}$ so that we have a natural operad map $\hoLie_n\to \hoPoiss_n$.

Suppose we are given some operad map $f: \Omega(\op C)\to \op P$. Such a map describes a Maurer-Cartan element (say $\alpha_f$) in the operadic convolution dg Lie algebra
\[
 \mathrm{Conv}(\op C,\op P)= \prod_N \Hom_{\bbS}(\op C(N), \op P(N)),
\]
see \cite[section 6.4.4]{lodayval} for the definition.
We define the deformation complex of the operad map $f$ to be the convolution dg Lie algebra, twisted by the Maurer-Cartan element $\alpha_f$ corresponding to $f$,
\[
 \Def(\Omega(\op C)\xrightarrow{f} \op P) := \mathrm{Conv}(\op C,\op P)^{\alpha_f}.
\]

\subsection{M. Kontsevich's graph complexes}
We quickly recall the construction of the (commutative) graph complexes. For more details see \cite{grt}.
Consider the set of connected directed graphs $\gra_{N,k}$ with $N$ vertices (uniquely) labelled by numbers $\{1,\dots,N\}$ and $k$ edges labelled set $\{1,\dots,k\}$.
There is a natural right action of the group $S_N\times (S_k\ltimes S_2^{k})$ by permuting the labels and changing the direction of edges.
We define the operad $\Gra_d$ such that 
\[
\Gra_d(N) = \oplus_k \left( \K \langle \gra_{N,k} \rangle \otimes \K[d-1]^{\otimes k} \right)_{S_k\ltimes S_2^k}
\]
where we declare that $S_k$ acts diagonally, and on the $\K[d-1]$ factors by permutation with sign (if $d$ is even), and $S_2^k$ acts with a sign if $d$ is odd.
The signs are chosen such that there is a map of operads $e_d\to \Gra_d$.
The full graph complex is the deformation complex
\[
\fGC_d =\Def(\hoLie_d\to \Gra_d) \cong  \prod_{N\geq 1} (\Gra_d(N) \otimes \K[-d]^\otimes N)^{S_N}[d].
\]
It carries a natural dg Lie algebra structure.
We will in particular consider two subcomplexes
\[
\GC_d\subset \GC_d^2\subset \fGC_d
\]
where $\GC_d^2$ is spanned by the connected graphs with at least bivalent vertices and $\GC_d$ is spanned by connected graphs with at least trivalent vertices.
We denote the differential on the graph complex by $\delta$. Combinatorially, $\delta$ acts like
\begin{equation}
 \delta\Gamma\,=\sum_{x\in V(\Gamma)}\frac{1}{2}s_x - a_x,
\end{equation}
where $V(\Gamma)$ is the set of vertices of $\Gamma$, $s_x$ stands for ``splitting of $x$'' and means inserting
\begin{tikzpicture}[scale=.5]
 \node[int] (a) at (0,0) {};
 \node[int] (b) at (1,0) {};
 \draw (a) edge[->] (b);
\end{tikzpicture}
instead of the vertex $x$ and summing over all possible ways of connecting the edges that have been connected to $x$ to the new two vertices, and $a_x$ stands for ``Adding an edge at $x$'' and  means adding
\begin{tikzpicture}[scale=.5]
 \node[int] (a) at (0,0) {};
 \node[int] (b) at (1,0) {};
 \draw (a) edge[->] (b);
 \node[above left] at (a) {$\scriptstyle x$};
\end{tikzpicture}
on the vertex $x$. Unless $x$ is an isolated vertex, $a_x$ will cancel one term of the splitting $s_x$.

The graph complex $\GC_d$ splits into a direct product of finite dimensional sub complexes according to loop order.

\subsection{The spectral sequence of \cite{KWZ}}\label{sec:KWZrecollection}
In the first paper of this series we introduced deformed differentials on the graph complexes above. 
For $d=0$ we can deform the differential to $\delta+\nabla$, where the additional operator $\nabla$ is defined as the Lie bracket with the tadpole graph
\[
\nabla =[
\begin{tikzpicture}[baseline=-.65ex]
\node[int](v) at (0,0){};
\draw (v) edge [loop] (v);
\end{tikzpicture}
, \cdot].
\]
Combinatorially, $\nabla$ acts by adding one edge, in all possible ways.

For $d=1$ we may deform the differential in a different way. First, note that there is a map of operads
\[
\Ass\to \Gra_1
\]
by sending the generator to the series of graphs
\[
\sum_{k\geq 0 }\frac 1 {k!}
\begin{tikzpicture}[baseline=-.65ex]
\node[ext](v) at (0,0){1};
\node[ext](w) at (1,0){2};
\draw (v) edge [bend right] (w) edge [bend left] node[above] {$\scriptstyle k\times $} (w) edge (w);
\end{tikzpicture}
\]
We hence obtain a map $\Lie\to \Ass\to \Gra_d$ by composition. We may form the deformation complex
\[
\Def(\hoLie_1\to \Gra_d) =:\fGC^\Theta.
\]
As a graded vector space the right hand side is isomorphic to $\fGC$ but the differential is deformed to, say, 
\[
\delta_\Theta = \delta + 
\sum_{k\geq 1 }\frac 1 {(2k+1)!}
\left[
\begin{tikzpicture}[baseline=-.65ex]
\node[int](v) at (0,0){};
\node[int](w) at (1,0){};
\draw (v) edge [bend right] (w) edge [bend left] node[above] {$\scriptstyle 2k+1\times $} (w) edge (w);
\end{tikzpicture}
,\,\cdot \,\right]
\]

The main results of \cite {KWZ} is the following.
\begin{thm}[Theorem 2 and Corollary 4 of \cite{KWZ}]
\begin{align}
H(\GC_0^2,\delta+\nabla) &\cong \K[1]
\\
H(\GC_1^2, \delta_\Theta) &\cong \K[3]
\end{align}
\end{thm}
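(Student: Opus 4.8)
The plan is to prove both identities by the same template: choose a filtration of the deformed complex whose associated graded differential is explicitly tractable, identify the first page of the resulting spectral sequence, and verify that the spectral sequence degenerates there. In both cases the deformed differential is the twisted differential of a deformation complex $\Def(\hoLie_d\to\Gra_d)$ for a Maurer--Cartan element whose part beyond the standard $\delta$ raises an auxiliary grading that $\delta$ itself respects, and that is what makes such a filtration available.

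For $d=0$ I would filter $(\GC_0^2,\delta+\nabla)$ by the number of vertices, using the decreasing filtration by ``at least $p$ vertices''. Both terms of $\delta$ -- vertex splitting and attaching an edge to a new vertex -- create exactly one new vertex, whereas $\nabla$ (adding one edge between already present vertices, in all ways) preserves the number of vertices; hence the associated graded differential is $\nabla$. In a fixed cohomological degree the number of vertices of a connected at-least-bivalent graph is bounded, so the filtration has finitely many steps in each degree and the spectral sequence converges. It then remains to compute $H(\GC_0^2,\nabla)$, which is a Koszul-type statement: on the graphs with a fixed vertex set, ``adding an edge in all ways'' is multiplication by the sum of all admissible edges inside the exterior algebra on the edge set, and this operator is acyclic as soon as that sum is nonzero; the connected at-least-bivalent graphs span a subcomplex of this full, acyclic complex, so its cohomology equals that of the complementary quotient complex -- disconnected graphs and graphs with a univalent vertex -- shifted by one. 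An inductive analysis of the latter, recursing on connected components, shows that the only surviving class comes from the single isolated vertex, which $\nabla$ carries to the tadpole. Thus $H(\GC_0^2,\nabla)=\K$, spanned by the tadpole graph; the first page of the spectral sequence is one-dimensional, it degenerates there, and $H(\GC_0^2,\delta+\nabla)\cong\K[1]$, the shift being read off from the degree of the tadpole.

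For $d=1$ this vertex filtration is useless, because every term of $\delta_\Theta$ -- the standard $\delta$ as well as every bracket with the graph $\theta_{2k+1}$ of two vertices joined by $2k+1$ parallel edges -- adds exactly one vertex. I would instead filter by loop order: bracketing with $\theta_{2k+1}$ for $k\ge 1$ raises the loop order by $2k\ge 2$, while $\delta$ preserves it, so the associated graded differential is $\delta$ and the first page is $H(\GC_1^2,\delta)$. By the standard description of $H(\GC_d^2)$ in terms of $H(\GC_d)$ and the loop classes, this page is $H(\GC_1)$ together with one loop class $L_j$ (the $j$-cycle) for each $j\equiv 3\bmod 4$, and the theorem becomes the assertion that the higher differentials of the spectral sequence cancel all of this against itself except for the triangle $L_3$, which represents the generator of $\K[3]$.

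This last step -- the total collapse -- is the genuine obstacle, and it is the $d=1$ analogue of the (easy) near-acyclicity of the full deformed complex that underlies the $d=0$ computation above. Since $H(\GC_1)$ is unknown, the collapse cannot be seen by following cancellations; it should instead be established at the level of the full deformed complex, where I would exploit the operad map $\Ass\to\Gra_1$ through which the Maurer--Cartan element defining $\delta_\Theta$ factors -- this map, with its coefficients $\tfrac1{k!}$, being precisely the origin of the coefficients $\tfrac1{(2k+1)!}$ in $\delta_\Theta$. On its connected part the deformed deformation complex $\Def(\hoLie_1\to\Gra_1)$ should be governed by the deformation cohomology of the Lie-operad map $\hoLie_1\to\Ass$, a Hochschild-type object with very small cohomology, from which one extracts $\K[3]$ after the usual chain of quasi-isomorphisms relating $\fGC_1$, $\GC_1^2$ and $\GC_1$ (the last passage being what isolates the loop classes). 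The hard core of the whole argument is producing the correct contracting homotopy, or gauge transformation -- built from the tadpole for $d=0$ and from the $\theta_{2k+1}$'s for $d=1$ -- and checking its compatibility with the connectivity and valence constraints that carve out the subcomplexes in play; for $d=0$ the argument above does this by hand, and for $d=1$ this is where essentially all the work lies.
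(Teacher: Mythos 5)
You should first note that the paper you were given does not prove this statement at all: it is quoted verbatim from the companion paper \cite{KWZ}, so the only meaningful comparison is with the proofs there. Your treatment of the first isomorphism is essentially the argument of \cite{KWZ}: the descending filtration by number of vertices has $\nabla$ as its associated graded differential, and the computation of $H(\GC_0^2,\nabla)$ proceeds by embedding the fixed-vertex-number piece into the acyclic Koszul complex (exterior algebra on the set of admissible edges with differential given by multiplication by the sum of all edges) and analysing the quotient by graphs that are disconnected or not everywhere at least bivalent. This is the right route; the one place where you are too quick is the sentence ``an inductive analysis of the latter, recursing on connected components, shows that the only surviving class comes from the single isolated vertex.'' Recursion on connected components only addresses disconnectedness; the connected graphs with a vertex of valence $\le 1$ need a separate argument, and the whole analysis must be carried out on $S_N$-(co)invariants, where many graphs vanish for sign reasons. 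That is the actual content of the $d=0$ computation, and you have only gestured at it.

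The genuine gap is in the second isomorphism. You correctly observe that no filtration isolates the higher terms of $\delta_\Theta$ (every term of $\delta_\Theta$ adds exactly one vertex, and any filtration compatible with the differential makes $\delta$ the leading term), so the only available spectral sequence has the unknown $H(\GC_1^2,\delta)$ on its first page and the theorem becomes a total-collapse statement that cannot be verified termwise. Your proposed escape --- that the deformed complex ``should be governed by'' $\Def(\hoLie_1\to\Ass)$ because the Maurer--Cartan element factors through the operad map $\Ass\to\Gra_1$ --- is not a proof and is not even a plausible formal consequence: the map $\Ass\to\Gra_1$ is very far from a quasi-isomorphism, and functoriality of deformation complexes gives you a map from the small Hochschild-type object into $(\fGC_1,\delta_\Theta)$ with no control over its (co)kernel. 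Nothing in your sketch produces the required contracting homotopy or gauge transformation, and you concede as much when you write that this is ``where essentially all the work lies.'' As it stands the second identity is asserted, not proved. (A smaller point worth checking: you claim the surviving class is the triangle $L_3$, but with the paper's grading conventions the $j$-cycle loop class in $\GC_1^2$ sits in degree $j-1$, which for $j=3$ does not match the degree of the stated generator of $\K[3]$; this suggests the identification of the surviving class also needs to be revisited.)
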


As a consequence one obtains spectral sequences converging to (essentially) 0, whose first page contains $H(\GC_d^2, \delta)$, for $d=0,1$.

\subsection{Graph operads and operadic twisting}\label{sec:graphsop}
Given an operad $\op P$ with a map $\hoLie_d\to \op P$ one may apply the formalism of operadic twisting \cite{vastwisting} to produce another operad $\Tw\op P$ which has the property that $\Tw\op P$ algebras may be "naturally" twisted by Maurer-Cartan elements. 
Furthermore, $\Tw\op P$ comes with a natural action of the deformation complex $\Def(\hoLie_d\to \op P)$.
We will consider the twisted operad
\[
\fGraphs_d = \Tw \Gra_d.
\]
Elements of $\fGraphs_d(N)$ are series of graphs with $N$ numbered ("external") vertices and an arbitrary number of indistinguishable ("internal") vertices, for example the following:
\[
 \begin{tikzpicture}[baseline=2ex]
  \node[ext] (v1) at (0,0) {$\scriptstyle 1$};
  \node[ext] (v2) at (2,0) {$\scriptstyle 2$};
  \node[ext] (v3) at (0,1) {$\scriptstyle 3$};
  \node[ext] (v4) at (2,1) {$\scriptstyle 4$};
  \node[int] (w1) at (.66,.5) {};
  \node[int] (w2) at (1.33,.5) {};
  \node[int] (w3) at (3,.5) {};
  \node[int] (w4) at (4,.5) {};
  \node[int] (w5) at (3,1.5) {};
  \node[int] (w6) at (4,1.5) {};
  \draw (v1) edge (v2) edge (v3) edge (w1)
        (w1) edge (w2) edge (v3)
        (w2) edge (v2) edge (v4)
        (w3) edge (w4) edge (w5) edge (w6)
        (w4) edge (w5) edge (w6)
        (w5) edge (w6);
 \end{tikzpicture}
\]
The operad $\fGraphs_d$ contains a suboperad $\Graphs_d$ (defined by Kontsevich \cite{K2}) spanned by graphs such that each connected component contains at least one external vertex and such that each internal vertex is at least trivalent. For example, the graph shown above does not satisfy these criteria, but the one below does:
\[
 \begin{tikzpicture}[baseline=2ex]
  \node[ext] (v1) at (0,0) {$\scriptstyle 1$};
  \node[ext] (v2) at (2,0) {$\scriptstyle 2$};
  \node[ext] (v3) at (0,1) {$\scriptstyle 3$};
  \node[ext] (v4) at (2,1) {$\scriptstyle 4$};
  \node[int] (w1) at (.66,.5) {};
  \node[int] (w2) at (1.33,.5) {};
  \draw (v1) edge (v2) edge (v3) edge (w1)
        (w1) edge (w2) edge (v3)
        (w2) edge (v2) edge (v4);
 \end{tikzpicture}
\]
Combinatorially the differential $\delta$ on $\Graphs_d$ is given by summing over all ways of splitting an (either external or internal) vertex, producing one additional internal vertex. Pictorially:
\begin{align}\label{equ:graphsdiff}
\delta
 \begin{tikzpicture}[baseline=-.65ex, scale=.7]
  \node[int] (v) at (0,0) {};
  \draw (v) edge +(-.5,.5) edge +(-.5,0) edge +(-.5,-.5);
  \draw (v) edge +(.5,.5) edge +(.5,0) edge +(.5,-.5);
 \end{tikzpicture}
 &=
 \sum
 \begin{tikzpicture}[baseline=-.65ex, scale=.7]
  \node[int] (v) at (0,0) {};
  \node[int] (w) at (1,0) {};
  \draw (v) edge (w);
  \draw (v) edge +(-.5,.5) edge +(-.5,0) edge +(-.5,-.5);
  \draw (w) edge +(.5,.5) edge +(.5,0) edge +(.5,-.5);
 \end{tikzpicture}
 &\text{or}
 & &
 \delta
  \begin{tikzpicture}[baseline=-.65ex, scale=.7]
  \node[ext] (v) at (0,0) {$\scriptstyle j$};
  \draw (v) edge +(-.5,.5) edge +(-.5,0) edge +(-.5,-.5);
  \draw (v) edge +(.5,.5) edge +(.5,0) edge +(.5,-.5);
 \end{tikzpicture}
&=
\sum
\begin{tikzpicture}[baseline=-.65ex, scale=.7]
  \node[ext] (v) at (0,0) {$\scriptstyle j$};
  \node[int] (w) at (1,0) {};
  \draw (v) edge (w);
  \draw (v) edge +(-.5,.5) edge +(-.5,0) edge +(-.5,-.5);
  \draw (w) edge +(.5,.5) edge +(.5,0) edge +(.5,-.5);
 \end{tikzpicture}
\, .
\end{align}
We refer the reader to \cite{K2,grt} for more details.

The important result for us is the following.
\begin{thm}[Kontsevich \cite{K2}, Lambrechts--Voli\'c \cite{LV}]\label{thm:KLV}
The map of operads
\[
\Poiss_d \to \Graphs_d.
\]
given on generators by the assignment
\begin{align*}
- \wedge - &\mapsto 
\begin{tikzpicture}[baseline=-.65ex]
\node[ext] (v) at (0,0) {1};
\node[ext] (w) at (0.7,0) {2};
\end{tikzpicture}
&
[-,-] &\mapsto 
\begin{tikzpicture}[baseline=-.65ex]
\node[ext] (v) at (0,0) {1};
\node[ext] (w) at (1,0) {2};
\draw (v) edge (w) ;
\end{tikzpicture}
\end{align*}
is a quasi-isomorphism. In particular $H(\Graphs_d(1))\cong\K$.
\end{thm}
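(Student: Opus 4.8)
The plan is to treat the two halves of the statement separately: first that the assignment is a well-defined operad morphism, then that it is a quasi-isomorphism. For the first half I would simply check the defining relations of $\Poiss_d$ on generators. Commutativity and associativity of $-\wedge-$ are immediate, since the product is sent to the graph on two external vertices with no edge. Antisymmetry of the bracket is the statement that reorienting or relabelling the single edge of the image graph produces the prescribed sign, which is precisely how the parity conventions on $\Gra_d$ were arranged. The Leibniz rule and the Jacobi identity are then verified by unwinding operadic composition in $\Gra_d$: substituting the ``single edge'' graph into a vertex of another graph redistributes that edge over the vertices of the inserted graph, summed with signs, and this sum is exactly the right-hand side of the Leibniz (resp.\ Jacobi) identity. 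This step is routine but sign-sensitive, and I would do it carefully.

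The substance of the theorem is that the morphism is a quasi-isomorphism, i.e.\ that $\Poiss_d(N)\to H(\Graphs_d(N))$ is an isomorphism for every $N$, the source carrying the zero differential. This is Kontsevich's theorem \cite{K2}, worked out in full by Lambrechts--Voli\'c \cite{LV}, and I would follow that strategy. One equips $\Graphs_d(N)$ with an exhaustive, bounded-below filtration --- for instance by a suitable combination of the number of internal vertices and the loop order of the internal subgraph --- chosen so that the differential on the associated graded retains only the part of $\delta$ that splits off a fresh bivalent internal vertex. The key lemma is that this ``leading'' differential is acyclic except on graphs with no internal vertices at all; this is proved by writing down an explicit contracting homotopy, morally ``contract one of the two edges adjacent to a bivalent internal vertex'', being careful that the remaining internal vertices stay at least trivalent and that all signs work out. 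What survives on the next page is the span of graphs on the $N$ external vertices with no internal vertices, equipped with the differential induced by the remaining part of $\delta$; a further reduction --- essentially the fact that trees modulo the IHX/antisymmetry relations compute the Lie operad, together with $\Poiss_d\cong\Com\circ\Lie\{d-1\}$ as symmetric sequences --- collapses this onto $\Poiss_d(N)$. Chasing the operad map through the pages shows that it realises exactly this isomorphism, and specialising to $N=1$ gives $H(\Graphs_d(1))\cong\Poiss_d(1)\cong\K$.

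The step I expect to be the main obstacle is the acyclicity of the auxiliary complex on the associated graded. The naive homotopy ``contract an edge at a bivalent internal vertex'' need not land back in $\Graphs_d$, since contractions can create vertices of valence $\le 2$, which are forbidden; so the homotopy has to be built with care, while simultaneously juggling the at-least-trivalence constraint, the no-double-edge and no-tadpole conventions in the relevant parity of $d$, and the signs attached to edges and internal vertices. A secondary technical point is convergence of the spectral sequence: here one uses that $\Graphs_d(N)$ is a product over loop order of bounded complexes, so that the chosen filtration is genuinely exhaustive and complete. Everything else --- the well-definedness check, the identification of the final page with $\Poiss_d(N)$, and the $N=1$ specialisation --- is then bookkeeping.
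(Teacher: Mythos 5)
The paper does not prove this statement at all: Theorem~\ref{thm:KLV} is recalled as an imported result, with the proof delegated entirely to Kontsevich \cite{K2} and Lambrechts--Voli\'c \cite{LV}. So there is no ``paper proof'' to compare against; what you have written is an outline of the strategy of those references, and at that level it is broadly the right roadmap (well-definedness by checking relations on generators, a filtration argument computing $H(\Graphs_d(N))$, identification of the surviving page with $\Poiss_d(N)\cong(\Com\circ\Lie\{d-1\})(N)$ via the tree/IHX description of $\Lie$). Be aware, though, that the hard content of the theorem is exactly the ``key lemma'' you defer: the full computation of $H(\Graphs_d(N))$ occupies most of the Lambrechts--Voli\'c memoir, and their actual argument runs through admissible diagrams and an induction on $N$ using a module splitting over $H^*(\mathrm{Conf}(N-1,\R^d))$ rather than a single contracting homotopy.

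One concrete inaccuracy in your sketch: in $\Graphs_d$ as defined in section~\ref{sec:graphsop}, internal vertices are required to be at least trivalent, so the vertex-splitting differential \eqref{equ:graphsdiff} never produces a bivalent internal vertex inside this complex --- the ``leading part of $\delta$ that splits off a fresh bivalent internal vertex'' is identically zero on $\Graphs_d(N)$, and the homotopy ``contract an edge at a bivalent internal vertex'' has nothing to act on. That homotopy is the right tool for a different (and genuinely needed) step, namely comparing $\Graphs_d$ with a larger complex such as $\fGraphs_d$ in which low-valence internal vertices are permitted; if you want to run your filtration argument you must first pass to such a larger model, prove the inclusion is a quasi-isomorphism by the bivalent/univalent-vertex homotopy, and only then filter. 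As a citation-level summary your proposal is fine; as a standalone proof it is not yet one, and the gap is concentrated precisely where you predicted it would be.
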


Furthermore, we note that (from the operadic twisting procedure) one obtains an action of the graph complex $\GC_d$ on the graphs operad $\Graphs_d$ by operadic derivations. In particular, given any Maurer-Cartan element in $\GC_d$ we may construct a deformation of the differential of $\Graphs_d$. Two cases are important for us:
\begin{itemize}
 \item For $d=0$ we can choose the Maurer-Cartan element 
 $\mu=\begin{tikzpicture}[baseline=-.65ex]
\node[int](v) at (0,0){};
\draw (v) edge [loop] (v);
\end{tikzpicture}$. The deformed differential then has the form $\delta+\mu\cdot$, where $\mu\cdot$ is the action of $\mu$ on $\Graphs_0$. Combinatorially, for a graph $\Gamma\in \Graphs_0(r)$, the element $\mu\cdot\Gamma\in \Graphs_0(r)$ is a sum over all graphs obtained by adding one edge between two distinct vertices (internal or external) in all possible ways.
We denote the operad $\Graphs_n$ with the thus twisted differential by $\Graphs_0^\tp$.
\item For $d=1$ we may similarly consider the Maurer-Cartan element 
\[
 \Theta = 
 \sum_{j\geq 1}
\frac{1}{(2j+1)!} \;
\underbrace{
\begin{tikzpicture}[baseline=-.65ex]
 \node[int] (v) at (-.5,0) {};
 \node[int] (w) at (0.5,0) {};
 \node at (0,0) {$\scriptstyle \cdots$};
 \draw (v) edge[bend left] (w)
       (v) edge[bend right] (w);
\end{tikzpicture}
}_{2j+1\text{ edges}}
\]
and obtain a deformed differential $\delta+\Theta\cdot$ on the operad $\Graphs_1$. 
We denote the operad $\Graphs_1$ with the thus twisted differential by $\Graphs_1^\Theta$.
\end{itemize}

\subsection{Hairy graph complexes}\label{sec:hairygraphs}
We consider the operad map $\hoPoiss_m\stackrel{*}{\to} \Graphs_n$ defined as the composition of the natural maps
\[
 \hoPoiss_m \to \Poiss_m\to \Com \to \Poiss_n \to \Graphs_n.
\]
The hairy graph complexes $\HGC_{m,n}$ are subcomplexes of the deformation complexes
\[
\Def(\hoPoiss_m\stackrel{*}{\to} \Graphs_n)\supset \HGC_{m,n}.
\]
Concretely, these complexes consist of maps that factor through $\hoLie_n$ and having images in connected graphs all of whose external vertices have valence one. Alternatively, we may describe elements of $\HGC_{m,n}$ as graphs without bivalent vertices, the univalent vertices having degree $m$ and the other vertices degree $n$ (and the edges degree $1-n$). 
Examples of such graphs are shown in \eqref{equ:hairysamples} in the introduction.

The differential on the hairy graph complexes is the one inheritex from $\Graphs_n$.
Combinatorially it is given by splitting vertices, pictorially
\[
 \delta 
 \begin{tikzpicture}[baseline=-.65ex, scale=.6]
  \node[int] (v) at (0,0){};
  \draw (v) edge +(-.5,-.5) edge +(0,-.5) edge +(.5,-.5)
            edge +(-.5,.5) edge +(0,.5) edge +(.5,.5);
 \end{tikzpicture}
 =
 \sum
  \begin{tikzpicture}[baseline=-.65ex, scale=.6]
  \node[int] (v) at (0,.3){};
  \node[int] (w) at (0,-.3){};
  \draw (w) edge +(-.5,-.5) edge +(0,-.5) edge +(.5,-.5) edge (v)
        (v) edge +(-.5,.5) edge +(0,.5) edge +(.5,.5);
 \end{tikzpicture},
\]
where one sums over splittings producing only vertices of valence at least 3.

The subcomplex $\HGC_{m,n} \subset \Def(\hoPoiss_m\stackrel{*}{\to} \Graphs_n)$ is in fact closed under the natural Lie bracket on the deformation complex. Combinatorially, the induced Lie bracket of two hairy graphs $\Gamma$ and $\Gamma'$ is obtained by summing over all ways of attaching one hair of $\Gamma$ to a vertex of $\Gamma'$, minus the same with $\Gamma$ and $\Gamma'$ interchanged. Pictorially:
\[
\left[ 
\begin{tikzpicture}[baseline=-.8ex]
\node[draw,circle] (v) at (0,.3) {$\Gamma$};
\draw (v) edge +(-.5,-.7) edge +(0,-.7) edge +(.5,-.7);
\end{tikzpicture}
,
\begin{tikzpicture}[baseline=-.65ex]
\node[draw,circle] (v) at (0,.3) {$\Gamma'$};
\draw (v) edge +(-.5,-.7) edge +(0,-.7) edge +(.5,-.7);
\end{tikzpicture}
\right]
=
\sum
\begin{tikzpicture}[baseline=-.8ex]
\node[draw,circle] (v) at (0,1) {$\Gamma$};
\node[draw,circle] (w) at (.8,.3) {$\Gamma'$};
\draw (v) edge +(-.5,-.7) edge +(0,-.7) edge (w);
\draw (w) edge +(-.5,-.7) edge +(0,-.7) edge +(.5,-.7);
\end{tikzpicture}
\pm 
\sum
\begin{tikzpicture}[baseline=-.8ex]
\node[draw,circle] (v) at (0,1) {$\Gamma'$};
\node[draw,circle] (w) at (.8,.3) {$\Gamma$};
\draw (v) edge +(-.5,-.7) edge +(0,-.7) edge (w);
\draw (w) edge +(-.5,-.7) edge +(0,-.7) edge +(.5,-.7);
\end{tikzpicture}.
\]

\subsection{The spectral sequences of \cite{grt,TW,TW2}}\label{sec:TWrecollection}
One can check that the element 
\[
h_0=
\begin{tikzpicture}[baseline=-.65ex]
\draw (0,0)--(.5,0);
\end{tikzpicture}
\]
is a Maurer-Cartan element of the dg Lie algebra $\HGC_{n,n}$.
Likewise, one can check that the element 
\[
h_1= \sum_{k\geq 1} \frac 1 {(2k+1)!}
\underbrace{
\begin{tikzpicture}[baseline=-.65ex]
\node[int] (v) {};
\draw (v) edge +(-.5,-.5) edge +(0,-.5) edge +(.5,-.5);
\end{tikzpicture}
}_{2k+1\times}
\]
is a Maurer-Cartan element in $\HGC_{n-1,n}$.

\begin{thm}[ \cite{TW}, \cite{TW2}, \cite{grt}]\label{thm:TW}
There are quasi-isomorphisms
\begin{align*}
\K \oplus \GC_{n}^2 &\to (\HGC_{n,n}, \delta+[h_0,\cdot]) \\
\K \oplus \GC_{n}^2 &\to (\HGC_{n-1,n}, \delta+[h_1,\cdot]).
\end{align*}
Furthermore the spectral sequence obtained by the filtration by number of hairs abuts at the $E_2$ page in the first case. 
\end{thm}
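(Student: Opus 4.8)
The plan is to deduce Theorem \ref{thm:TW} from the general machinery of operadic twisting together with the quasi-isomorphism $\Poiss_n\to \Graphs_n$ of Theorem \ref{thm:KLV}. The starting observation is that the deformation complex $\Def(\hoPoiss_m\stackrel{*}{\to}\Graphs_n)$, in which $\HGC_{m,n}$ sits, carries a natural dg Lie structure, and twisting by the Maurer-Cartan element $h_0$ (resp. $h_1$) merely changes the differential by $[h_0,\cdot]$ (resp. $[h_1,\cdot]$). So the two statements are instances of the following scheme: take the map $\Poiss_m\to\Com\to\Poiss_n\to\Graphs_n$ (for $m=n$, resp. the analogous one for $m=n-1$), and note that the twisted Maurer-Cartan element changes it into the honest operad map $\Lie_n\to\Graphs_n$ (for $h_0$) or $\Lie_{n-1}\to\Graphs_n$ (for $h_1$), because adding the element $h_0$ amounts to adding the bracket generator, i.e. turning the ``commutative'' map into the ``Poisson'' map. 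Then $H(\Graphs_n(1))\cong\K$ by Theorem \ref{thm:KLV}, and the deformation complex of $\hoLie_n\to\Graphs_n$ (restricted to connected graphs with univalent external vertices) is computed from the cohomology of $\Graphs_n$ via the bar-cobar/convolution formalism, yielding exactly $\K\oplus\GC_n^2$, with the copy of $\K$ coming from $H(\Graphs_n(1))$ and the $\GC_n^2$ coming from the higher-arity part. Concretely, I would:

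\emph{Step 1.} Identify $(\HGC_{n,n},\delta+[h_0,\cdot])$ with a piece of $\Def(\hoLie_n\stackrel{f}{\to}\Graphs_n)$ where $f$ is the map obtained by twisting $*$ by $h_0$; here one checks $h_0$ is Maurer–Cartan (stated in the excerpt) and that the twisted map factors through $\hoLie_n$ because $h_0$ supplies the missing bracket. Analogously handle $h_1$ in $\HGC_{n-1,n}$, where $h_1$ is the ``Theta-like'' series that upgrades $\Com$ to $\Poiss_{n-1}$ in the graphs model (the series $h_1$ is precisely the image of the $\hoLie_{n-1}$-structure under the standard formula, compare the $\Theta$ and $h_1$ formulas in the excerpt).

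\emph{Step 2.} Use the standard fact that for an operad $\op P$ with $H(\op P(1))\cong\K$ concentrated in a single degree, the deformation complex $\Def(\hoLie_d\to\op P)$ is quasi-isomorphic to $\K\oplus\Def(\hoLie_d\to H(\op P))$, and then observe that $\Def(\hoLie_d\to \Poiss_d)$ — restricted to the connected-graphs-with-univalent-external-legs part — is exactly $\GC_d^2$ with its differential $\delta$. This identifies the cohomology with $\K\oplus H(\GC_n^2)$, and running the argument at chain level gives the asserted quasi-isomorphism $\K\oplus\GC_n^2\to(\HGC_{n,n},\delta+[h_0,\cdot])$, and similarly for the $h_1$ case. (For the $h_1$ case one is really working with $\Graphs_n^\Theta$-type twisted differentials, but the cohomology input is the same Theorem \ref{thm:KLV}.)

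\emph{Step 3.} For the last sentence, show the hair-number filtration spectral sequence on $(\HGC_{n,n},\delta+[h_0,\cdot])$ degenerates at $E_2$. The associated graded differential is the piece of $\delta+[h_0,\cdot]$ preserving the hair count — this is $\delta$ plus the part of $[h_0,\cdot]$ that attaches the single hair of $h_0=\lin$ to a vertex while not changing total hair number, i.e. reconnecting a hair through an edge; one computes $E_1$, then identifies the $d_1$ differential with the one whose cohomology is $\K\oplus\GC_n^2$ sitting in a single hair-degree, forcing all higher differentials to vanish.

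The main obstacle will be \emph{Step 2}: making precise the reduction of the full deformation complex $\Def(\hoLie_n\to\Graphs_n)$ to $\K\oplus\GC_n^2$ at the chain level, compatibly with the subcomplex condition (connected, univalent external vertices), and tracking the degree shifts. This requires a careful spectral-sequence argument filtering by the number of internal vertices of $\Graphs_n$ (so that the $E_1$-term replaces $\Graphs_n$ by its cohomology $\Poiss_n$ by Theorem \ref{thm:KLV}), and then recognizing the resulting complex. The $E_2$-degeneration in Step 3 is expected to be comparatively routine once Step 2 is in place, since it amounts to a dimension/degree-counting match between the $E_1$ page and the known answer; the truly new content, and the part deferred to \cite{TW2} in the excerpt, is establishing that no room is left for higher differentials, which follows once one knows both the size of $E_1$ and that the total cohomology is $\K\oplus H(\GC_n^2)$ concentrated as computed.
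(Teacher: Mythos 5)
The paper itself contains no proof of Theorem \ref{thm:TW}: it is imported verbatim from \cite{grt}, \cite{TW} and \cite{TW2} (the $E_2$-abutment claim in particular is deferred to the forthcoming \cite{TW2}), so there is no internal argument to compare yours against. Judged on its own merits, your proposal has a genuine gap at its central step. Step 1 is fine in spirit: twisting the Maurer--Cartan element of $\hoPoiss_n\to\Com\to\Graphs_n$ by $h_0$ yields the Kontsevich map $\hoPoiss_n\to\Poiss_n\to\Graphs_n$, and (modulo convergence of the arity filtration, which needs the loop-order splitting into finite-dimensional pieces) formality of $\Graphs_n$ lets one replace the target by $\Poiss_n$. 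But Step 2 then asserts that $\Def(\hoLie_d\to\Poiss_d)$, ``restricted to the connected-graphs-with-univalent-external-legs part'', is exactly $\GC_d^2$ with differential $\delta$. This is not meaningful, let alone true: $\Poiss_d$ contains no graphs, and $\Def(\hoLie_d\to\Poiss_d)$ is a complex of symmetric-group (anti)invariants of $\Poiss_d(N)$ whose cohomology is precisely the quantity the theorem computes. (You may be conflating $\Poiss_d$ with $\Gra_d$, for which $\Def(\hoLie_d\to\Gra_d)=\fGC_d$ by definition; but $\Gra_d\to\Graphs_d$ is not a quasi-isomorphism, so that substitution is not available.) Identifying $H(\Def(\hoLie_n\to\Poiss_n))$ with $\K\oplus H(\GC_n^2)$ is the hard content of the cited papers; you have relocated the difficulty, not resolved it. The actual proofs do not proceed via formality in this way: for the $h_0$ case one filters so that the leading term of the differential is ``attach one hair at a vertex'', observes that this operator is a Koszul/de Rham differential on a symmetric algebra over the vertex set and hence acyclic away from the image of the zero-hair part, and thereby identifies the relevant page with the one-hair graphs $F(\gamma)$, $\gamma\in\GC_n$; the $h_1$ case is the main theorem of \cite{TW} and requires a long chain of intermediate complexes and Koszulness arguments.

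Step 3 also does not work as described. The $E_1$ page of the hair filtration on $(\HGC_{n,n},\delta+[h_0,\cdot])$ is the \emph{unknown} hairy cohomology $H(\HGC_{n,n},\delta)$, so there is no ``dimension count against the known answer'' to perform; knowing the abutment $\K\oplus H(\GC_n^2)$ does not force degeneration at $E_2$ unless one independently computes $E_2$ and shows it already has that size --- which is exactly the nontrivial content deferred to \cite{TW2}.
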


\section{The spectral sequence: $m$ even}\label{sec:spectrale}
Our goal in this section is to deform the differential $\delta$ on the hairy graph complex $\HGC_{0,n}$ to a new differential $D$ for which the cohomology is computable, and, in fact, trivial.
The construction goes as follows:
The space of unary operations $\Graphs_n(1)$ in the Kontsevich operad $\Graphs_n$ of section \ref{sec:graphsop} may be identified with the completed symmetric algebra $\hat S(\HGC_{0,n})$ as a graded vector space. Indeed, the identification is realized by deleting the external vertex in a graph in $\Graphs_n(1)$ and interpreting the edges previously incident at the external vertex as hairs, as the following example illustrates.
\[
 \begin{tikzpicture}[baseline=-.65ex]
  \node[ext] (v) at (0,-.5) {$\scriptstyle 1$};
  \node[int] (w1) at (0,0) {};
  \node[int] (w2) at (-.5,.3) {};
  \node[int] (w3) at (.5,.3) {};
  \draw (v) edge (w1) edge (w2) edge (w3) 
        (w1) edge (w2) edge (w3) 
        (w2) edge (w3);
 \end{tikzpicture}
\mapsto
 \begin{tikzpicture}[baseline=-.65ex]
  \node[int] (w1) at (0,0) {};
  \node[int] (w2) at (-.5,.3) {};
  \node[int] (w3) at (.5,.3) {};
  \draw (w1) edge (0,-.5)
        (w1) edge (w2) edge (w3) 
        (w2) edge (w3) edge (-.7,-.5)
        (w3) edge (.7,-.5);
 \end{tikzpicture}
\]
If the hairy graph thus produced is not connected, we interpret it as a product (within the symmetric algebra) of its connected components. We call a graph in $\Graphs_n$ \emph{internally connected} if the graph obtained by deleting of all external vertices is non-empty and connected. In particular, the internally connected graphs 
in $\Graphs_{n}(1)$ may be identified with $\HGC_{m,n}$ as a graded vector space.\footnote{Likewise, the internally connected graphs in $\Graphs_n(r)$ can be identified with hairy graphs whose hairs come in $r$ colors. Such graphs are connected to the study of string links \cite{ST}, but we will not pursue them further in this paper.}

It is clear from the combinatorial description \eqref{equ:graphsdiff} of the differential on $\Graphs_n$ that the internally connected graphs form a subcomplex of $\Graphs_n(r)$ for each $r$.
In particular, the differential of $\Graphs_n(1)$ defines a differential on the internally connected subcomplex.
Identifying this subcomplex with $\HGC_{0,n}$ we obtain our desired deformed differential $D$ as the one induced by the differential on $\Graphs_n(1)$.
This differential has the following explicit combinatorial form:
\[
D
\begin{tikzpicture}[baseline=-.65ex]
\node (v) {$\Gamma$};
\draw (v) edge +(-.5,-.5) edge +(0,-.5) edge +(.5, -.5) edge +(-.5,.5) edge +(0,.5) edge +(.5, .5);
\end{tikzpicture}
 =
 \delta \Gamma +
 \sum_S
 \begin{tikzpicture}[baseline=-.65ex]
 \node (v) at (0,.5) {$\Gamma$};
 \node[int] (w) at (0,-.5) {};
\draw (w) edge +(0,-.5)
          (v) edge[bend left] node[right] {$S$} (w) edge[bend right] (w) edge(w) edge +(-.5,.5) edge +(0,.5) edge +(.5, .5);
\end{tikzpicture}\, .
\]
Here $\delta$ is the original (undeformed) differential and the sum on the right-hand side is over all subsets $S$ of the set of hairs with at least two elements.

It is clear that our new differential $D$ on $\HGC_{0,n}$ is indeed a deformation of the original differential $\delta$: Filtering $\HGC_{0,n}$ by the number of hairs, the differential induced by $D$ on the associated graded is exactly $\delta$.
Furthermore, the filtration by the number of hairs gives rise to the spectral sequence of Theorem \ref{thm:main}. 
To show Theorem \ref{thm:main} it hence suffices to show the following result.

\begin{prop}\label{prop:mevenspecs}
The complex $(\HGC_{0,n}, D)$ is acyclic, and the spectral sequence associated to the filtration by arity of the external vertex converges to 0.
\end{prop}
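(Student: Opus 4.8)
The strategy is to read the statement off from Theorem~\ref{thm:KLV} via the identification of $\Graphs_n(1)$ with the completed symmetric algebra $\hat S(\HGC_{0,n})$ recalled above. Under this identification the operation of deconcatenating a graph into its internally connected components turns $\hat S(\HGC_{0,n})$ into the cofree complete cocommutative coalgebra cogenerated by $\HGC_{0,n}$, and --- as one sees directly from the combinatorial form of the vertex-splitting differential --- the differential $\delta$ of $\Graphs_n(1)$ is a \emph{coderivation} for this coproduct. Its corestriction to the cogenerators $\HGC_{0,n}$ has linear part exactly the deformed differential $D$ defined above, together with higher operations $\delta_k\colon S^k(\HGC_{0,n})\to\HGC_{0,n}$ for $k\geq 2$ that merge $k$ internally connected graphs into one by gathering one or more hairs from each of them onto a single fresh (at least trivalent) internal vertex carrying one new hair. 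Equivalently, $(\HGC_{0,n},D,\delta_2,\delta_3,\dots)$ is a shifted $L_\infty$-algebra and $(\Graphs_n(1),\delta)$ is its Chevalley--Eilenberg chain complex; Theorem~\ref{thm:KLV} then says this chain complex has cohomology $\K$, spanned by the unit $1\in S^0$ (the graph consisting of the external vertex alone). The whole task is to turn this acyclicity into acyclicity of $D$ on the cogenerators.

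The bookkeeping device is the weight $w(\Gamma)=(\text{loop order of }\Gamma)+(\text{number of hairs of }\Gamma)$ of a hairy graph $\Gamma$, extended additively to monomials. By Theorem~\ref{thm:main}(2) the differential $D$ preserves $w$; a short count using the trivalence condition shows that the weight-$w$ subspace $\HGC_{0,n}^{(w)}\subset\HGC_{0,n}$ is finite-dimensional and vanishes for $w\leq 1$, so that $\HGC_{0,n}=\prod_{w\geq 2}\HGC_{0,n}^{(w)}$ is a product of finite-dimensional subcomplexes for $D$. On the $\Graphs_n(1)$ side, $\delta$ preserves the first Betti number $b$ of the whole graph $G$; write $\Graphs_n(1)^{[b]}$ for the corresponding subcomplex, which is finite-dimensional (the trivalence condition bounds the number of internal vertices and edges in terms of $b$) and satisfies $H(\Graphs_n(1)^{[b]})=0$ for $b\geq 1$ by Theorem~\ref{thm:KLV}. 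Moreover $\delta$ is non-increasing in $w$ --- it induces $D$ on the weight-graded pieces, while each $\delta_k$ lowers $w$ by $k-1$ --- and on $\Graphs_n(1)^{[b]}$ one has the identity $w=b+k$ with $k$ the number of internally connected components, so the weight filtration is a finite filtration there.

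Now the proof would be a minimal-counterexample argument. Suppose $H(\HGC_{0,n},D)\neq 0$, and let $w_0$ be minimal with $H(\HGC_{0,n}^{(w_0)},D)\neq 0$; then $w_0\geq 2$. Consider the finite-dimensional, acyclic complex $\Graphs_n(1)^{[w_0-1]}$ with the weight filtration. Its lowest weight layer is the weight-$w_0$ part, which --- since $w=b+k$ forces $k=1$ there --- is precisely $(\HGC_{0,n}^{(w_0)},D)$, so the page $E_1$ of the associated spectral sequence has the nonzero entry $H(\HGC_{0,n}^{(w_0)},D)$ in that layer. This layer emits no differential (it is the bottom of a finite filtration), and every differential \emph{into} it originates --- through the merging operations $\delta_{r+1}$ --- from monomials of weight $w_0+r$ with exactly $r+1$ factors ($r\geq 1$), all of whose factors have weight $\leq w_0-r<w_0$; by minimality of $w_0$ and the K\"unneth formula such monomials are $D$-acyclic, hence already vanish on $E_1$. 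Consequently the bottom layer survives intact to $E_\infty$, contradicting $E_\infty=\gr H(\Graphs_n(1)^{[w_0-1]})=0$. Therefore $H(\HGC_{0,n},D)=0$. Finally, the filtration by the number of hairs (equivalently, by the arity of the external vertex) is compatible with the decomposition $\HGC_{0,n}=\prod_w\HGC_{0,n}^{(w)}$ and restricts to a finite filtration on each finite-dimensional piece, so the associated spectral sequence converges; its first page is $H(\HGC_{0,n},\delta)$, the hairy graph cohomology, since the differential induced on the hair-associated graded by $D$ is $\delta$, and it converges to $\prod_w H(\HGC_{0,n}^{(w)},D)=0$.

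The step I expect to require the most care is the one in the first paragraph: verifying precisely that the vertex-splitting differential on $\Graphs_n(1)$ is a coderivation of the deconcatenation coproduct whose linear corestriction is exactly $D$ (this is essentially the content of the combinatorial formula for $D$ displayed above, but needs the signs and the trivalence constraint on the vertex created by a split of the external vertex to be tracked carefully), identifying the higher components $\delta_k$, and checking that each lowers the weight by $k-1$. Granting those structural facts, together with the two finiteness statements (for $\HGC_{0,n}^{(w)}$ and for $\Graphs_n(1)^{[b]}$), everything else is a formal consequence of Theorems~\ref{thm:KLV} and~\ref{thm:main}(2).
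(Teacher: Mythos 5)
Your argument is correct, and the acyclicity part takes a genuinely different route from the paper. The paper's proof is very short: convergence of the spectral sequence follows because $\Graphs_n(1)$ and its internally connected part split into finite-dimensional subcomplexes according to Euler characteristic (your decomposition by the first Betti number $b$ is the same thing), and the vanishing $H(\HGC_{0,n},D)=0$ is not proved but cited from \cite{severa} (Appendix B, where the internally connected subcomplex appears as $\mathsf{CG}$ for $n=2$). You instead \emph{derive} the vanishing from Theorem \ref{thm:KLV}: interpreting $(\Graphs_n(1),\delta)$ as the completed Chevalley--Eilenberg complex of the (shifted) $L_\infty$-structure $(D,\delta_2,\dots)$ on the cogenerators $\HGC_{0,n}$, and exploiting that the weight $w=(\text{loops})+(\text{hairs})$ satisfies $w=b+k$ on a monomial with $k$ internally connected factors, is bounded below by $2$ per factor, and is preserved by $D$ while being strictly lowered by each $\delta_k$, $k\geq 2$. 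Your minimal-counterexample/weight-filtration argument is sound: within $\Graphs_n(1)^{[w_0-1]}$ the bottom weight layer is exactly $(\HGC_{0,n}^{(w_0)},D)$, the higher layers are $E_1$-acyclic by minimality and the characteristic-zero K\"unneth formula, and $E_\infty=0$ forces $H(\HGC_{0,n}^{(w_0)},D)=0$. What your approach buys is a self-contained proof (modulo the Kontsevich--Lambrechts--Voli\'c formality result, which the paper quotes anyway) valid uniformly in $n$, whereas the paper's citation requires the remark that the $n=2$ restriction in loc.\ cit.\ is immaterial; what it costs is the sign- and degree-sensitive verification, which you rightly flag, that $\delta$ really is the coderivation extension of the displayed corestrictions --- this uses that every internally connected component carries at least one hair (forced by the connectivity requirement in the definition of $\Graphs_n$) and that splitting the external vertex onto a subset $S$ of hairs with $|S|\leq 1$ is excluded by the trivalence condition. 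Your treatment of the convergence statement coincides with the paper's.
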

\begin{proof}
The complex $\Graphs_n(1)$, and the internally connected piece splits into a direct sum of finite dimensional subcomplexes according to the Euler characteristic of graphs. Hence the above spectral sequence clearly converges to the cohomology $H(\HGC_{0,n},D)$.
 Hence it suffices to show that this cohomology vanishes.
A proof of this vanishing result may be found in \cite{severa} (see in particular Appendix B therein), where the internally connected subcomplex was denoted by $\mathsf{CG}$. (In fact, loc. cit. only considers the case $n=2$, but this does not play a role for the proof.)


\end{proof}

\subsection{Remark: The image of the ordinary graph cohomology}\label{sec:image of bald}
As noted in (e.g.) \cite{TW2} the ordinary (non-hairy) graph complex $\GC_n$ may be embedded into the hairy complexes $\HGC_{m,n}$ by adding one hair, in all possible ways.
\begin{equation}\label{equ:primedef}
\gamma\mapsto F(\gamma):=
\sum_v 
 \begin{tikzpicture}[baseline=-.65ex]
 \node (v) at (0,.3) {$\gamma$};
\draw (v) edge +(0,-.5);
\end{tikzpicture}
\quad\quad\quad\text{(attach hair at vertex $v$)}
\end{equation}
Furthermore, this map induces an injection on the cohomology level.
In particular, the image of any bald graph cocycle $\gamma\in \GC_n$ yields a cocycle $\Gamma\in \HGC_{0,n}$ which must be a coboundary under the deformed differential $D$ according to Proposition \ref{prop:mevenspecs}.
The purpose of this section is to remark that one has an explicit formula for the element whose $D$-coboundary is $\Gamma$.
To this end let us restrict to the subcomplex $\GC_n^{1vi}\subset \GC_n$ of one-vertex irreducible graphs. (This subcomplex is known to be quasi-isomorphic to the full complex \cite{CV,grt} and hence restricting to the subcomplex does not harm generality.) Define the map 
\begin{equation}\label{equ:onedef}
\begin{gathered}
 \GC_n^{1vi} \to \HGC_{0,n} \\
 \gamma \mapsto \gamma_1 
\end{gathered}
\end{equation}
that sums over all vertices of $\gamma$, deleting the vertex and declaring the incident edges as hairs.

\begin{prop}[]\label{prop:onemapcompat}
The map $\gamma \mapsto \gamma_1$ above satisfies the equation
\[
 (\delta \gamma)_1 = D \gamma_1 \pm F(\gamma).
\]
\end{prop}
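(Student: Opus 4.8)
The plan is to exploit the identification of $\HGC_{0,n}$ with the internally connected part of $\Graphs_n(1)$ and, more to the point, to work inside the larger complex $\Graphs_n(1)\cong\hat S(\HGC_{0,n})$ where $D$ is induced by the bona fide differential $\delta_{\Graphs}$ of the Kontsevich operad. The key observation is that both sides of the claimed identity should be recognized as components of a single equation obtained by applying $\delta_{\Graphs}$ to an explicitly chosen graph in $\Graphs_n(1)$ (or $\Graphs_n(2)$), and then projecting onto the internally connected part. Concretely, given $\gamma\in\GC_n^{1vi}$, consider the graph $\widetilde\gamma\in\Graphs_n(1)$ obtained by taking $\gamma$ and attaching the single external vertex to $\gamma$ in all possible ways at \emph{two} distinct internal vertices simultaneously --- i.e. $\widetilde\gamma$ has the external vertex joined by two edges into the body of $\gamma$. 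Then $\gamma_1$ is (up to sign) the operation "delete one vertex of $\gamma$ and turn its incident edges into hairs"; I would first check that $\gamma_1$ equals the internally connected projection of the result of contracting one of the two external edges of $\widetilde\gamma$, so that $\widetilde\gamma$ is a natural "primitive" interpolating between $\gamma_1$ and $F(\gamma)$.

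The main computation is then to expand $\delta_{\Graphs}\widetilde\gamma$ using \eqref{equ:graphsdiff}: the differential splits each internal and the external vertex. I would organize the resulting terms into three groups. First, the terms that split an internal vertex of $\gamma$ without involving the external vertex reassemble, after summing over the attachment points, into $(\delta\gamma)_1$ --- here the hypothesis that $\gamma$ is one-vertex irreducible is what guarantees that deleting a vertex keeps the graph connected, so that all these terms stay in the internally connected subcomplex and no correction terms are lost. Second, the terms that split the external vertex, or split an internal vertex adjacent to the external vertex in a way that moves an external edge, produce exactly the extra edge-contraction/edge-addition pieces of $D$ applied to $\gamma_1$; summing over attachment points these assemble into $D\gamma_1$ together with the "contract an external edge" term. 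Third, the boundary term of the telescope: contracting one of the two external edges all the way collapses the external vertex onto a vertex of $\gamma$, which after deleting the external vertex is precisely $F(\gamma)$ (a hair attached at a vertex of $\gamma$, summed over vertices). Matching $\delta_{\Graphs}\widetilde\gamma$ projected to internally connected graphs against these three groups, and using $\delta_{\Graphs}^2=0$ to control signs, yields $(\delta\gamma)_1 = D\gamma_1 \pm F(\gamma)$.

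I expect the genuine obstacle to be bookkeeping of \emph{signs and multiplicities} rather than any conceptual difficulty: the operadic (de)suspension conventions in the definition of $\Graphs_n$, the ordering of the two external edges of $\widetilde\gamma$ (which carry a sign when $n$ is odd), the symmetry factors when the two attachment points of the external vertex coincide or are adjacent, and the degree shifts hidden in $F$ (which raises degree by one) and in the map $\gamma\mapsto\gamma_1$. In particular one must be careful that the "diagonal" contributions --- where splitting the external vertex reattaches both new external edges to the same internal vertex, or where the two body-edges of $\widetilde\gamma$ land on the same vertex --- either cancel in pairs or get absorbed correctly; this is the analogue of the remark after \eqref{equ:graphsdiff} that $a_x$ cancels one term of $s_x$. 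A clean way to sidestep part of this is to phrase the whole argument as: the assignment $\gamma\mapsto\widetilde\gamma$ is a chain homotopy, up to the image map $F$, between $(\delta(-))_1$ and $D(-)_1$, and then the identity is just the statement $\delta_{\Graphs}\widetilde\gamma \mp \widetilde{\delta\gamma} = (\text{internally connected representative of } F(\gamma))$ read off degree by degree. Once the signs in that one equation are pinned down (by testing on a small example such as the theta graph or $\gamma=$ the tetrahedron), the general case follows formally.
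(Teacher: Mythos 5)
The paper states Proposition \ref{prop:onemapcompat} without proof, treating it as a routine combinatorial verification, so there is no official argument to compare against; judged on its own terms, your proposal starts from the right place (view $\gamma_1$ inside $\Graphs_n(1)$, where $D$ is the honest differential, and match terms), but the central device does not work. The auxiliary element $\widetilde\gamma$, with the external vertex attached to the body of $\gamma$ by \emph{two} edges, has first Betti number $b_1(\gamma)+1$, whereas $\gamma_1$, $F(\gamma)$ and $(\delta\gamma)_1$ all have Betti number $b_1(\gamma)$ when viewed in $\Graphs_n(1)$. Since the differential \eqref{equ:graphsdiff} preserves the Betti number (each splitting adds one vertex and one edge), $\delta\widetilde\gamma$ lives in the wrong graded piece and cannot reproduce any of the three terms of the claimed identity; likewise the ``contract an external edge'' operations you invoke are not part of the differential of $\Graphs_n$ in the conventions of this paper, whose differential only splits vertices. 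Moreover, recasting the statement as ``$\widetilde\gamma$ is a chain homotopy up to $F$'' would at best prove the identity up to a coboundary, which is strictly weaker than the on-the-nose equality asserted.

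The proof that is actually needed is the direct computation you sketch in your ``three groups'' paragraph, but applied to $\gamma_1=\sum_v \gamma|_{v\to 1}$ itself rather than to $\widetilde\gamma$. Splitting an internal vertex $w\neq v$ of $\gamma|_{v\to 1}$ matches the terms of $(\delta\gamma)_1$ in which the externalized vertex is not one of the two halves of the split vertex; splitting the external vertex so that it retains at least two of its old edges matches the terms of $(\delta\gamma)_1$ in which one half of the split vertex is externalized; and the splitting that moves \emph{all} old edges to the new internal vertex leaves a univalent external vertex and yields exactly $F(\gamma)$ (one hair attached at $v$, summed over $v$). The one genuinely delicate point, which your proposal does not isolate, is the family of external splittings that leave the external vertex with exactly one old edge: for each edge $\{v,w\}$ of $\gamma$ this produces the graph ``$\gamma$ minus that edge, with hairs at $v$ and at $w$'' twice, once from externalizing $v$ and once from externalizing $w$, and one must check that these two contributions cancel in sign --- the analogue of the cancellation that keeps the trivalent complex $\GC_n$ closed under $\delta$. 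Your use of one-vertex irreducibility (to keep every term connected, hence in $\HGC_{0,n}$) is correct and is indeed why the proposition is stated on $\GC_n^{1vi}$.
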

In particular, it follows that if $\gamma\in \GC_n$ is a one-vertex irreducible cocycle, then 
\[
 D\gamma_1 =\pm \Gamma.
\]

\subsection{Remark: Compatibility of the differentials}
By Theorem \ref{thm:main} and the results recalled in sections \ref{sec:KWZrecollection} and \ref{sec:TWrecollection} we have several spectral sequences containing the non-hairy and hairy graph cohomology.


We want to remark that all the above deformations of the differential are quite beautifully compatible, 
in the sense that they may be extracted from one deformation of the differential on $\HGC_{0,n}$ ($n=0,1$).
In fact, we saw at the beginning of this section that $\HGC_{0,n}$ may be considered as the internally connected part of $\Graphs_n(1)$, as a graded vector space, and the differential on $\Graphs_n(1)$ yielded the deformed differential $D$ on $\HGC_{0,n}$. But we may as well consider $\HGC_{0,n}$ as the internally connected part of $\Graphs_0^\tp(1)$ for $n=0$ and $\Graphs_0^\Theta(1)$ for $n=1$,\footnote{See the end of section \ref{sec:graphsop} for the definition.} and consider the corresponding induced differentials (say $\tilde D$) on $\HGC_{0,0}$ and $\HGC_{0,1}$. The differential $\tilde D$ deforms $D$ and hence also the original differential $\delta$.

In fact, one has the following results:
\begin{itemize}
  \item One can consider the descending complete filtration on $(\HGC_{0,n},\tilde D)$ by loop number (in $\HGC_n$).\footnote{\label{footn:loops} Note that the number of loops of an element $\Gamma\in \HGC_{0.n}$ differs from the number of loops of the image of that element in $\Graphs_n(1)$ by the number of hairs minus one.} The associated graded complex is the complex $(\HGC_{0,n}, [h_n,\cdot])$ considered in section \ref{sec:TWrecollection}.
 \item One can consider the descending complete filtration on $(\HGC_{0,n},\tilde D)$ by the number of loops in the images of hairy graphs in $\Graphs_{n}(1)$.\textsuperscript{\ref{footn:loops}} The associated graded complex is $(\HGC_{0,n}, D)$ as considered above.
 \item If $n=0$ the map \eqref{equ:onedef} is already a map of complexes $(\GC_{0}, \delta+\nabla)\to (\HGC_{0,n}, \tilde D)$.
 For $n=1$ there is a modified version of the map \eqref{equ:onedef} defined as
 \[
\gamma \mapsto F'(\gamma)
\sum_{k\geq 0}
\frac{1}{(2k+1)!}
\sum
\underbrace{
 \begin{tikzpicture}[baseline=-.65ex]
 \node (v) at (0,.3) {$\Gamma$};
\draw (v) edge +(0,-.5) edge +(-.5,-.5) edge +(.5,-.5);
\end{tikzpicture}
}_{2k+1\times}
\quad\quad\quad\text{(sum over all ways of attaching hairs)}
 \]
 \item Proposition \ref{prop:onemapcompat} extends nicely. Namely, the map \eqref{equ:onedef} satisfies for $n=0$ the compatibility relation 
 \[
  \tilde D \gamma_1 = ((\delta+\nabla)\gamma)_1 \pm F(\gamma)
 \]
and for $n=1$ the relation
\[
  \tilde D \gamma_1 = (\delta_\Theta \gamma)_1 \pm F'(\gamma).
\]
\end{itemize}

\subsection{Picture of the hairy graph cohomology: The waterfall mechanism}\label{sec:waterfalle}
As described in the introduction, the spectral sequences considered above can be used to generate many (non-trivial) hairy graph cohomology classes from non-hairy classes.
Concretely, start with a nontrivial non-hairy graph cocycle $\gamma$. Then the corresponding hairy cocycle $\Gamma=F(\gamma)$ is a non-trivial graph cocycle in $(\HGC_{0,n},\delta)$. Hence it must be killed by some other cocycle $X$ under the deformed differential $D$. In fact, by the remarks of section \ref{sec:image of bald} we may take $X=\gamma_1$.
Now $X$ must either kill or be killed by some other cohomology class $Y$ in the second spectral sequence. This class $Y$ must in turn kill or be killed by another class, etc.

Overall, there is a string of non-trivial hairy graph cohomology classes obtained from any non-hairy class.
The tables in Figure \ref{fig:cancellatione} show the dimensions of the hairy graph cohomology in each bidegree (number of hairs and loop order).
The arrows indicate how classes kill each other in the two spectral sequences of Theorem \ref{thm:main} (blue) and Theorem \ref{thm:TW} (red). (Not all cancellations are shown for the sake of readability.)

\begin{figure}

 \begin{align*}
&  \begin{tikzpicture}
\matrix (mag) [matrix of math nodes,ampersand replacement=\&]
{
{\phantom{2} } \& 1 \& 2 \& 3 \& 4 \& 5 \& 6 \& 7 \& 8 \&  \&  \&  \&  \&  \&   \\
 9 \& 1_{16} \& 1_{16} \&  \&  \&  \&  \&  \&  \&   \&  \& \& \& \& \\
 8 \&       \& 1_{15} \&  \&  \&  \&  \&  \&  \&   \&  \& \& \& \& \\
 7 \& 1_{12} \& 1_{12} \&  \&  \&  \&  \&  \&  \&   \&  \& \& \& \& \\
 6 \&       \& 1_{11} \&  \&  \&  \&  \&  \&  \&  \&  \&  \&  \& \&  \\
 5 \& 1_8    \&  \&  \&   1_{11} \& \&  \&  \&  \&  \&  \&  \&  \& \&  \\
 4 \&       \&  \&  \& 1_8 \&  \&  \&  \&  \&  \&  \&  \&  \& \&  \\
 3 \& 1_4    \&  \&  \& 1_7 \&  \&  \&  \&  \&  \&  \&  \&  \&  \&\\
 2 \&       \&  \&  \&  \&  \&  \&  \&  \&  \&  \&  \&   \& \& \\
 1 \&       \&  \& 1_1 \&  \& 1_1 \& 1_4 \& 1_1 \& 1_1,1_4 \&  \&  \&   \& \& \& \\
};
\draw (mag-1-1.south) -- (mag-1-9.south);
\draw (mag-1-1.east) -- (mag-10-1.east);
\draw[-, purple!30, thick] (mag-3-3.center) edge (mag-2-3.center);
\draw[-, purple!30, thick] (mag-5-3.center) edge (mag-4-3.center);
\draw[-, purple!30, thick] (mag-8-5.center) edge (mag-7-5.center);
\draw[-, blue!30, thick] (mag-10-4) edge (mag-8-2);
\draw[latex-, blue!50, thick] (mag-10-4) edge (mag-8-2);
\draw[-latex, blue!50, thick] (mag-6-2) edge (mag-10-6);
\draw[-latex, blue!50, thick] (mag-8-5) edge (mag-10-7);
\draw[-latex, blue!50, thick] (mag-5-3) edge (mag-7-5);
\draw[-latex, blue!50, thick] (mag-4-3) edge (mag-10-9);
\draw[-latex, blue!50, thick] (mag-6-5) edge (mag-10-9);
\end{tikzpicture}
&\begin{tikzpicture}
\matrix (mag) [matrix of math nodes,ampersand replacement=\&]
{
{\phantom{2} } \& 1 \& 2 \& 3 \& 4 \& 5 \& 6 \& 7 \& 8 \&  \&  \&  \&  \&  \&   \\
 9 \&       \& 1_6 \&  \&  \&  \&  \&  \&  \&   \&  \& \& \& \& \\
 8 \&       \&  \&  \&  \&  \&  \&  \&  \&   \&  \& \& \& \& \\
 7 \& 1_{5} \& 1_5 \& 2_5 \&  \&  \&  \&  \&  \&   \&  \& \& \& \& \\
 6 \&       \&  \& 1_2 \&  \&  \&  \&  \&  \&  \&  \&  \&  \& \&  \\
 5 \&       \& 1_2 \& 2_2 \& 3_2  \& 5_2 \&  \&  \&  \&  \&  \&  \&  \& \&  \\
 4 \&       \&  \& 1_1 \& 1_1 \& 1_{-1} \&  \&  \&  \&  \&  \&  \&  \& \&  \\
 3 \& 1_1   \& 1_1 \& 2_1 \& 2_1 \& 3_1 \&  \&  \&  \&  \&  \&  \&  \&  \&\\
 2 \&       \&  \&  \&  \& 1_{-2} \& 1_{-2} \&  \&  \&  \&  \&  \&   \& \& \\
 1 \&       \&  \& 1_{-2} \& 1_{-2} \& 2_{-2} \& 2_{-2},1_{-5} \& 1_{-5} \&   \&  \&  \&   \& \& \& \\
};
 \draw (mag-1-1.south) -- (mag-1-9.south);
 \draw (mag-1-1.east) -- (mag-10-1.east);
 \draw[-, purple!30, thick] (mag-2-3.center) edge (mag-4-3.center);
 \draw[-, purple!30, thick] (mag-6-3.center) edge (mag-8-3.center);
 \draw[-, purple!30, thick] (mag-5-4.center) edge (mag-7-4.center);
  \draw[-, purple!30, thick] (mag-6-4.center) edge (mag-8-4.center);
  \draw[-, purple!30, thick] (mag-6-5.center) edge (mag-8-5.center);
  \draw[-, purple!30, thick] (mag-6-6.center) edge (mag-8-6.center);
  \draw[-, purple!30, thick] (mag-7-6.center) edge (mag-9-6.center);
 \draw[-latex, blue!30, thick] (mag-8-2) edge (mag-10-4);
 \draw[-latex, blue!30, thick] (mag-8-3) edge (mag-10-5);
 \draw[-latex, blue!30, thick] (mag-8-4) edge (mag-10-6);
 \draw[-latex, blue!30, thick] (mag-8-5) edge (mag-10-7);
 \draw[-latex, blue!30, thick] (mag-6-3) edge (mag-10-7);
  \draw[-latex, blue!30, thick] (mag-6-4) edge (mag-10-8);
 \draw[-latex, blue!30, thick] (mag-4-2) edge (mag-6-4);
 \draw[-latex, blue!30, thick] (mag-4-3) edge (mag-6-5);
\end{tikzpicture}
\end{align*}

  \caption{\label{fig:cancellatione} 
  Computer generated table of the dimensions of the hairy graph cohomology $\text{dim}H(\HGC_{2,2})$ (left) and $\text{dim}H(\HGC_{2,3})$ (right).
  The rows indicate the number of hairs ($\uparrow$), the columns the loop order ($\rightarrow$). A table entry $1_3$ means that there the degree 3 subspace is one-dimensional. 
  The arrows indicate (some of) the cancellations of classes in the two spectral sequences discussed in section \ref{sec:waterfalle}, which we call the ``waterfall mechanism''. The computer program used approximate (floating point) arithmetic, so the displayed numbers should not be considered as rigorous results.
  }
\end{figure}
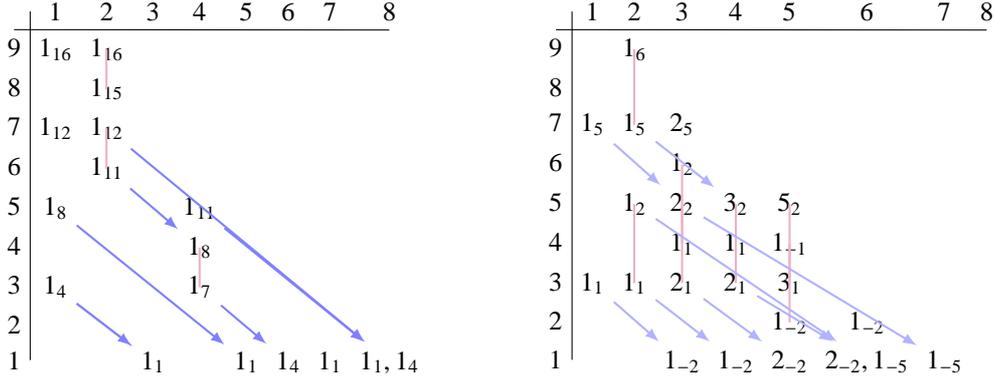

We call this mechanism of creating hairy classes from bald the ``waterfall mechanism''. 
(The cancellation patterns look a bit like streams of water going downhill. The (bald) cohomology in the bottom line is witness that there are streams above it, and a source from which streams originate.)

We note that in the case of even $n$ the waterfall mechanism accounts for all hairy classes in the computer accessible regime.
For odd $n$, we see an additional 4 classes which do not originate from bald classes in hairs/loop order combinations $(1,5)$, $(4,5)$, $(4,3)$ and $(6,3)$.

Note also that the convergence of the spectral sequences to known data is not enough to reconstruct the precise location (i.e., tri-degree) of non-trivial cohomology classes in the ``string'', because we do not know on in general on which pages certain classes are cancelled. However, the picture in Figure \ref{fig:cancellatione} seems to indicate that such cancellations follow a fairly regular pattern.
The study of the abutment properties might hence be an interesting topic of future work.
Indeed, for $n$ even the result of the forthcoming work \cite{TW2} shows that all cancellations of the spectral sequence of section \ref{sec:TWrecollection} happen on the $E^2$-page, in consistency with the numerical tables. 

%
%
%
%

\section{The spectral sequence: $m$ odd}\label{sec:spectralo}
If the source dimension is $m=-1$, i.e., odd, we can define the following additional operation of degree $1$:
\begin{gather*}
\Delta \colon \HGC_{-1,n} \to \HGC_{-1,n} \\
\Gamma \mapsto
\begin{cases}
0 & \quad \text{if $\Gamma$ has exactly one hair.} \\
\sum \pm
\begin{tikzpicture}[baseline=-.65ex]
\node(v) at (0,0) {$\Gamma$};
\draw (v) edge[loop ] (v);
\draw (v) edge +(-.5,-.5) edge +(0,-.5) edge +(.5,-.5) ;
\end{tikzpicture}
\end{cases}
\end{gather*}
where the sum in the second line is over all ways of choosing one of the hairs and reconnecting it to a vertex of $\Gamma$ other than the vertex the hair originated from.

\begin{lemma}
The operation $\Delta$ squares to zero and (anti-)commutes with $\delta$, so that $(\delta+\Delta)^2=0$.
\end{lemma}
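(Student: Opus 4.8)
The plan is to verify the three relations $\Delta^2=0$, $\delta\Delta+\Delta\delta=0$ separately by careful combinatorial bookkeeping of the operations involved, using the explicit descriptions: $\delta$ splits vertices (producing a new internal vertex of valence $\geq 3$), and $\Delta$ picks a hair and reconnects it to another vertex, simultaneously creating a tadpole (loop) at the vertex where the hair originated. Throughout I would keep track of signs via the standard orientation data on $\HGC_{-1,n}$ (ordering of hairs/edges with appropriate parities), but the core of the argument is the matching of terms, so I will focus on that and treat signs as a routine, if tedious, check at the end.

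First I would handle $\Delta^2=0$. Applying $\Delta$ twice means choosing an ordered pair of (distinct) hairs $(h_1, h_2)$, reconnecting $h_1$ to some vertex and creating a tadpole at its origin, then reconnecting $h_2$ similarly. The key observations: (i) if $h_1$ and $h_2$ originate at distinct vertices, the two reconnection-plus-tadpole moves are performed at independent locations, so the pair $(h_1,h_2)$ and the pair $(h_2,h_1)$ yield the same resulting graph; the sign picked up by transposing the two elementary operations is $-1$ (they each have degree $1$), so these two contributions cancel. (ii) If $h_1$ and $h_2$ originate at the same vertex $v$, one must be slightly more careful: after the first move a tadpole sits at $v$, and one must check that the second move cannot reconnect to the freshly-created tadpole end, and again pair up $(h_1,h_2)$ with $(h_2,h_1)$. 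Also the case where the first move reconnects $h_1$ to the origin-vertex of $h_2$ (or vice versa) must be examined — but the definition forbids reconnecting a hair to its own origin only, not to other vertices, so these terms genuinely occur and again cancel in the $(h_1,h_2)\leftrightarrow(h_2,h_1)$ pairing. I expect $\Delta^2=0$ to follow cleanly from this antisymmetry-under-swap argument once the sign conventions are pinned down.

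Next, the anticommutator $\delta\Delta+\Delta\delta=0$ is the main obstacle, since now two structurally different operations interact and there is no manifest symmetry. The strategy is the usual one for graph-complex differential identities: expand both $\delta\Delta\Gamma$ and $\Delta\delta\Gamma$ as sums over (vertex-split, hair-reconnect) data, and exhibit a sign-reversing bijection between the terms. One organizes the terms of $\delta\Delta$ by: which hair $h$ was moved, which vertex $w$ it was reconnected to, which vertex $x$ was split by $\delta$, and how the incident edges were distributed — and similarly for $\Delta\delta$. Generic terms (where the split vertex $x$ is disjoint from the origin vertex $v$ of $h$ and from $w$, and the tadpole is untouched) match up directly: splitting-then-reconnecting equals reconnecting-then-splitting, with opposite sign from commuting the two degree-$1$ operations. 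The delicate terms are the boundary cases: (a) $\delta$ splits the vertex $v$ at which the tadpole was just created (in $\delta\Delta$) versus $\Delta$ moving a hair off one of the two halves of a split vertex (in $\Delta\delta$); (b) $\delta$ splitting $w$, the target of the reconnection; (c) terms where $\delta$ acts on the tadpole edge itself, i.e. splits a vertex "along" the loop. One must check that all such apparently-unmatched terms either cancel in pairs within $\delta\Delta\Gamma$ (or within $\Delta\delta\Gamma$) — for instance when a split could place the moved hair's endpoint on either of the two new half-vertices — or are killed by the valence-$\geq 3$ constraint on the new internal vertex, or correspond precisely to a term on the other side. This case analysis, and in particular correctly handling the interaction of $\delta$ with the newly created tadpole and with the relocated hair, is where the real work lies.

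Finally, I would assemble the pieces: $(\delta+\Delta)^2 = \delta^2 + (\delta\Delta+\Delta\delta) + \Delta^2 = 0 + 0 + 0 = 0$, using $\delta^2=0$ (already known, as $\delta$ is the inherited graph-complex differential) together with the two identities just established. I would also remark that well-definedness of $\Delta$ on $\HGC_{-1,n}$ — i.e. that it lands in connected graphs with all external vertices univalent and respects the degree conventions — should be checked first, though this is immediate from the construction since reconnecting a hair preserves connectivity and the number of hairs, and the added tadpole raises the loop order by one while the degree count of the loop edge accounts for the degree $+1$ of $\Delta$.
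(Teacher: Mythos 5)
The paper's own proof of this lemma is just the phrase ``a straightforward verification'', so what matters here is whether your verification plan is a verification of the right operation --- and it is not. You have misread the definition of $\Delta$. The operation does \emph{not} move a hair to another vertex while ``simultaneously creating a tadpole (loop) at the vertex where the hair originated''. Rather, the chosen hair is itself converted into an internal edge: its free univalent endpoint is deleted and the loose end of its edge is glued to a vertex $w$ different from the vertex $v$ carrying the hair. The loop drawn on the blob $\Gamma$ in the paper's picture is this new edge, whose two endpoints now both lie inside $\Gamma$; it is not a self-loop at a single vertex (such tadpoles are not admissible graphs in these complexes anyway). Three of your own consistency checks expose the misreading: (i) the number of hairs drops by one --- which is exactly why $\Delta$ is declared to be zero on one-hair graphs, so as to stay inside $\HGC_{-1,n}$ --- it is not ``preserved'' as you assert; (ii) the degree of $\Delta$ is $+1$ because a univalent vertex of degree $m=-1$ is deleted, not because an edge of degree $1-n$ is added; your accounting gives $+1$ only for $n=0$, whereas the lemma is for all $n$; (iii) the loop order rises by one because the vertex count drops while the edge count stays fixed.

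The skeleton of your argument (pair the terms of $\Delta^2$ under $(h_1,h_2)\leftrightarrow(h_2,h_1)$; build a sign-reversing bijection with a boundary-case analysis for $\delta\Delta+\Delta\delta$; then expand $(\delta+\Delta)^2$) is the right shape and is essentially what the paper's ``straightforward verification'' amounts to, but your case lists must be redone for the correct operation. There is no ``freshly-created tadpole end'' for a second application of $\Delta$ to interact with, and no terms where $\delta$ ``splits a vertex along the loop''. For $\Delta^2=0$ the point is simply that reattaching one hair neither creates nor removes admissible targets for another, and the sign under swapping comes from the ordering of the odd univalent vertices in the orientation data. For the anticommutator the genuine boundary cases are: $\delta$ splitting the source vertex $v$ or the target vertex $w$ of the reattached edge, matched in $\Delta\delta$ against the choice of which of the two halves of a split vertex carries the hair, respectively receives the reattached end, together with the valence-$\geq 3$ constraint. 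As written, your proposal verifies properties of a different (and incorrectly graded) operation, so it does not establish the lemma.
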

\begin{proof}
A straightforward verification.
\end{proof}

Of course, in analogy with Proposition \ref{prop:mevenspecs} we would like to show the following result.
\begin{conjecture}\label{conj:vanishing}\label{conj:main}
$H(\HGC_{-1,n},\delta+\Delta)=0$ for all $n$. 
\end{conjecture}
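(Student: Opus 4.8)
The plan is to imitate the proof of Proposition \ref{prop:mevenspecs} as closely as possible. First I would record the grading that makes everything finite: the operation $\Delta$ sends a graph with $h$ hairs and loop order $l$ to a sum of graphs with $h-1$ hairs and loop order $l+1$, while $\delta$ preserves both $h$ and $l$, so $\delta+\Delta$ preserves $l+h$. Since the pieces of fixed numbers of hairs and loops are finite dimensional (as noted in the introduction), each piece of fixed $l+h$ is finite dimensional, and $(\HGC_{-1,n},\delta+\Delta)$ splits as a direct product of finite-dimensional complexes. Hence convergence of any associated spectral sequence is automatic and it suffices to prove acyclicity of each piece. I would also point out at the outset that the filtration by the number of hairs reduces $\delta+\Delta$ to $\delta$ on the associated graded, so its first page is the hairy cohomology $H(\HGC_{-1,n})$ itself; this is precisely the waterfall reformulation of the conjecture and makes clear that proving acyclicity demands a genuinely new input beyond $H(\HGC_{-1,n})$.

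The heart of the argument, mirroring Section \ref{sec:spectrale}, would be to realize $(\HGC_{-1,n},\delta+\Delta)$ as the internally connected part $\mathsf{CG}$ of the arity-one component $\mathcal{G}(1)$ of an appropriate odd analogue $\mathcal G$ of the Kontsevich operad $\Graphs_n$: a graphs operad in which the hairs (the edges incident to the single external vertex) carry odd degree, so that $\mathcal G(1)$ is modeled on the free \emph{graded}-commutative (exterior-type) algebra on $\HGC_{-1,n}$ rather than on the symmetric algebra $\hat S(\HGC_{0,n})$ of the even case. The crucial structural point to verify is that the differential induced on the internally connected subcomplex is exactly $\delta+\Delta$: the $\delta$ part comes from splitting internal vertices as before, while the new term $\Delta$ — moving the free endpoint of one hair off the external vertex and onto an internal vertex — must be produced by the part of the operadic differential acting on the \emph{odd} external vertex. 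Granting this identification, the vanishing reduces to an odd analogue of \v Severa's acyclicity of $\mathsf{CG}$ (the input used in Proposition \ref{prop:mevenspecs}).

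The remaining, and main, task is therefore to prove that this odd $\mathsf{CG}$ is acyclic. For the even operad the corresponding statement follows either from the formality theorem of Kontsevich and Lambrechts--Voli\'c (Theorem \ref{thm:KLV}) or from the explicit contraction of \cite{severa}. The two natural routes here are: (i) to establish an odd analogue of Theorem \ref{thm:KLV}, namely a quasi-isomorphism from a $\Poiss$-type operad onto $\mathcal G$ collapsing $H(\mathcal G(1))$ to $\K$; or (ii) to exhibit an explicit contracting homotopy on $\mathsf{CG}$, the natural candidate being the operator that cuts one internal edge and turns it into a hair (a partial inverse to $\Delta$), followed by a discrete-Morse / spectral-sequence argument pairing the surviving generators.

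I expect the main obstacle to be precisely this core acyclicity for $n$ odd. The odd degree of the hairs introduces signs that obstruct the naive contraction: the candidate homotopy degenerates on configurations in which all hairs emanate from a single vertex, and the cross terms coming from the interaction of $\delta$ with $\Delta$ cancel only after a delicate, parity-sensitive sign bookkeeping. For $n$ even these signs can be controlled and an explicit homotopy carries the argument through — this is the route realized in the third author's thesis \cite{MarkoThesis} — whereas for $n$ odd, controlling the higher differentials of the hair filtration (equivalently, verifying that every class of $H(\HGC_{-1,n})$ is eventually cancelled) is the crux that keeps the statement at the level of Conjecture \ref{conj:main}.
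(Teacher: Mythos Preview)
The paper does not prove this statement: it is explicitly labeled a conjecture, supported only by the numerical evidence of Figure~\ref{fig:cancellationo} and a footnote that the case of even $n$ is treated in \cite{MarkoThesis}. There is therefore no proof in the paper to compare against, and your proposal is, by your own closing paragraph, a strategy outline rather than a proof.

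The substantive gap is in your second step. You assert that $(\HGC_{-1,n},\delta+\Delta)$ can be realized as the internally connected part of an odd variant $\mathcal G(1)$ of $\Graphs_n(1)$, with $\Delta$ produced by the operadic differential at the external vertex. But the combinatorics do not match in any obvious way. In the even case the extra piece of $D$ \emph{fuses several hairs through a newly created internal vertex}; this is precisely what splitting the external vertex of $\Graphs_n(1)$ yields. By contrast, $\Delta$ takes a single hair and reattaches its free end to an \emph{existing} internal vertex, converting a hair into an internal edge without creating any new vertex. That is not a vertex-splitting move, and it does not visibly arise from the differential of any standard twist or variant of $\Graphs_n$. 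You would need to construct $\mathcal G$ explicitly and verify that its differential restricts to $\delta+\Delta$ on the internally connected piece; you do not do this, and the paper gives no hint that such an operad exists. Even granting the identification, neither of your routes (i) or (ii) for the acyclicity of the putative odd $\mathsf{CG}$ is carried out: no odd analogue of Theorem~\ref{thm:KLV} is available, and the candidate homotopy (cutting an internal edge into a hair) is not shown to contract anything. Your first paragraph on the $l+h$ grading and finite-dimensionality is correct and useful, and your diagnosis of why odd $n$ is harder is accurate, but the proposal does not supply the missing ingredient that keeps this a conjecture.
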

The conjecture is supported by low loop-order calculations and computer results, cf. Figure \ref{fig:cancellationo}

We may endow $\HGC_{-1,n}$ with a descending complete filtration by loop number. It is clear that the differential $\Delta$ increases the loop number by one, and hence the first page of the associated spectral sequence will agree with $(\HGC_{-1,n}, \Delta)$. 
Let us summarize the situation.

\begin{prop}
Consider the spectral sequence obtained from the filtration by loop order on $(\HGC_{-1,n},\delta+\Delta)$. Its first page is the hairy graph complex $(\HGC_{-1,n},\delta)$. Furthermore, if Conjecture \ref{conj:vanishing} holds, than the spectral sequence converges to 0.
\end{prop}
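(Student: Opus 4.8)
The plan is to verify directly that the filtration by loop number on $(\HGC_{-1,n},\delta+\Delta)$ is a filtration of complexes, and then to identify the first page. The first step is to make precise the grading on $\HGC_{-1,n}$ by loop order (the first Betti number of a hairy graph, where hairs do not contribute to loops), and to check that $\delta$ preserves this grading. This is immediate from the combinatorial description of $\delta$: splitting a vertex does not change the Euler characteristic of the underlying graph once the new edge is counted, and since vertex-splitting turns one vertex into two and adds one edge, the first Betti number is unchanged. Next I would check that $\Delta$ raises the loop number by exactly one: the operation detaches a hair from one vertex and re-attaches the hair-end to a different vertex of $\Gamma$, which converts an external half-edge into an internal edge, thereby adding one edge between two existing vertices while keeping the vertex set and all other edges fixed; hence the first Betti number goes up by one. (The exceptional case of a single hair, where $\Delta=0$, is trivially compatible.)

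With those two facts in hand, the filtration $F^p \HGC_{-1,n} := \bigoplus_{\ell \geq p} (\HGC_{-1,n})_\ell$ by loop order $\ell$ is a descending filtration stable under $\delta+\Delta$, since $\delta$ preserves $\ell$ and $\Delta$ increases it. It is exhaustive and Hausdorff because each fixed number of hairs and loops gives a finite-dimensional piece, and it is complete in the sense required to run the spectral sequence (one works loop-order by loop-order, or notes that the complex is a product over loop orders). The associated graded complex $\gr^p$ in loop degree $p$ carries the differential induced by the part of $\delta+\Delta$ that preserves $\ell$, which is exactly $\delta$. Therefore the $E_0$ page is $\big(\bigoplus_\ell (\HGC_{-1,n})_\ell,\ \delta\big) = (\HGC_{-1,n},\delta)$, and passing to cohomology, the $E_1$ page is $H(\HGC_{-1,n},\delta)$; this establishes the first assertion. (Strictly, the proposition as phrased says the \emph{first page} \emph{is} the complex $(\HGC_{-1,n},\delta)$; I read this as: the $E_0$-term with its differential is that complex, so that $E_1 = H(\HGC_{-1,n},\delta)$, which is what the subsequent ``waterfall'' discussion uses.)

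For the second assertion, assume Conjecture \ref{conj:vanishing}, i.e. $H(\HGC_{-1,n},\delta+\Delta)=0$. Convergence of the spectral sequence to $H(\HGC_{-1,n},\delta+\Delta)$ follows from completeness and exhaustiveness of the filtration together with the finite-dimensionality of each fixed bidegree (so that in each total degree only finitely many filtration steps are relevant and the spectral sequence is bounded in the relevant range); hence it converges to $0$. The only genuine content beyond routine bookkeeping is the compatibility of $\Delta$ with the loop filtration and the identification of the induced differential on $\gr$ — so the main obstacle is really the \emph{input} Conjecture \ref{conj:vanishing}, which we do not prove here; everything downstream of it is formal. One point deserving a careful check is that the filtration is complete (not merely exhaustive): since $\HGC_{-1,n}$ is a product over the number of hairs of complexes that are themselves bounded in loop order once the degree and hair number are fixed, completeness holds, and the standard complete-convergence theorem for spectral sequences of filtered complexes applies.
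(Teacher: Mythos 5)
Your identification of the filtration and of the induced differential on the associated graded is correct and matches the paper, which treats that part as obvious: $\delta$ preserves the loop order, $\Delta$ raises it by exactly one, so the associated graded carries the differential $\delta$ and the $E_1$ page is $H(\HGC_{-1,n},\delta)$.

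The gap is in the convergence step, which is exactly the part the paper singles out as the only non-obvious one. Your justification rests on the claim that ``in each total degree only finitely many filtration steps are relevant'', equivalently that the loop order is bounded once the cohomological degree and the number of hairs are fixed. This is false. A hairy graph in $\HGC_{-1,n}$ with $h$ hairs, $v$ internal vertices and $e$ edges has degree $d=-h+nv+(1-n)e$ and loop order $\ell=e-v-h+1$; writing $k=2e-3v-h\ge 0$ for the excess over trivalence one finds (e.g.\ for $n=2$) $v=2d+3h+k$ and $\ell=d+h+k+1$, so for fixed $d$ and $h$ there are graphs of arbitrarily large loop order. Hence the filtration restricted to a fixed total degree has infinitely many nontrivial stages, the bounded-convergence theorem you invoke does not apply, and the same miscounting undermines your stated reason for completeness (which nevertheless holds, simply because the complex is a \emph{product} over loop orders, not because loop order is bounded in fixed degree). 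Two correct ways to close the gap: (i) the paper's route --- for a descending filtration that is complete, Hausdorff and bounded above, $E_\infty^p$ is the subquotient $F^pH/F^{p+1}H$ of the cohomology, so it vanishes as soon as $H(\HGC_{-1,n},\delta+\Delta)=0$; or (ii) observe that $\delta+\Delta$ preserves the quantity $(\text{loop order})+(\text{number of hairs})$, so the complex splits as a product of subcomplexes on each of which the loop order takes only finitely many values, and there your boundedness argument becomes legitimate.
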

\begin{proof}
The only non-obvious statement is that the spectral sequence converges to the cohomology. However, it holds in general that the spectral sequence of a descending complete (bounded above) filtration converges to a subspace of the cohomology, which must 0 in this case as the cohomology is (assumed to be) 0.
\end{proof}

\subsection{Tentative picture of the hairy graph cohomology for $m$ odd and the waterfall mechanism}\label{sec:waterfallo}
If Conjecture \ref{conj:vanishing} holds we may again use the two spectral sequences of Theorem \ref{thm:TW} and of the previous section to generate many non-trivial hairy cohomology classes out of bald classes, just as in section \ref{sec:waterfalle}. We will again call the mechanism the ``waterfall mechanism''. The tentative cancellations in the two spectral sequences are illustrated in Figure \ref{fig:cancellationo} in the computer accessible regime.
 
 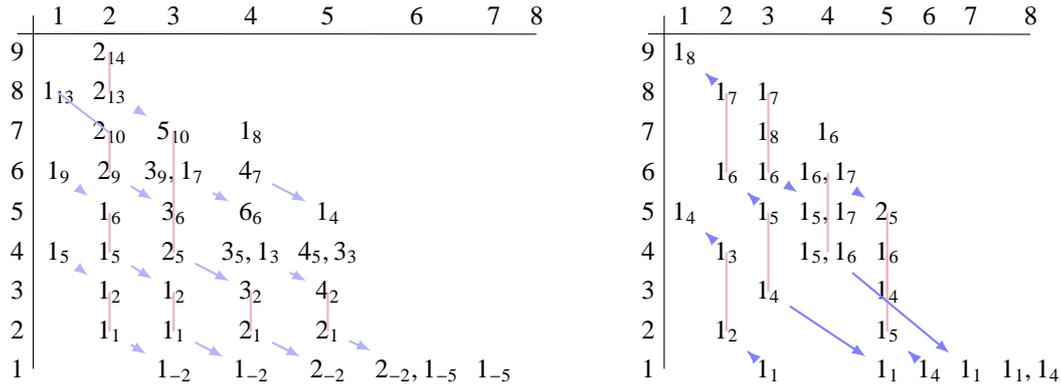
\begin{figure}
\begin{align*}
&
\begin{tikzpicture}
\matrix (mag) [matrix of math nodes,ampersand replacement=\&]
{
{\phantom{2} } \& 1 \& 2 \& 3 \& 4 \& 5 \& 6 \& 7 \& 8 \&  \&  \&  \&  \&  \&   \\
 9 \&       \& 2_{14} \&  \&  \&  \&  \&  \&  \&   \&  \& \& \& \& \\
 8 \&  1_{13}  \&  2_{13} \&  \&  \&  \&  \&  \&  \&   \&  \& \& \& \& \\
 7 \&       \& 2_{10} \& 5_{10} \& 1_8 \&  \&  \&  \&  \&   \&  \& \& \& \& \\
 6 \&  1_9 \& 2_9  \& 3_9,1_7 \& 4_7 \&  \&  \&  \&  \&  \&  \&  \&  \& \&  \\
 5 \&       \& 1_6 \& 3_6 \& 6_6  \& 1_4 \&  \&  \&  \&  \&  \&  \&  \& \&  \\
 4 \&  1_5   \& 1_5 \& 2_5 \& 3_5,1_3 \& 4_{5},3_3 \&  \&  \&  \&  \&  \&  \&  \& \&  \\
 3 \&      \&  1_2 \& 1_2 \& 3_2  \& 4_2 \&  \&  \&  \&  \&  \&  \&  \&   \&\\
 2 \&       \&   1_{1} \& 1_{1} \& 2_{1} \& 2_{1} \&  \&  \&   \&  \& \&  \&   \& \& \\
 1 \&       \&  \& 1_{-2} \& 1_{-2} \& 2_{-2} \& 2_{-2},1_{-5} \& 1_{-5} \&   \&  \&  \&   \& \& \& \\
};
 \draw (mag-1-1.south) -- (mag-1-9.south);
 \draw (mag-1-1.east) -- (mag-10-1.east);
 \draw[-, purple!30, thick] (mag-2-3.center) edge (mag-3-3.center);
 \draw[-, purple!30, thick] (mag-4-3.center) edge (mag-5-3.center);
 \draw[-, purple!30, thick] (mag-6-3.center) edge (mag-7-3.center);
 \draw[-, purple!30, thick] (mag-6-4.center) edge (mag-7-4.center);
  \draw[-, purple!30, thick] (mag-5-4.center) edge (mag-6-4.center);
  \draw[-, purple!30, thick] (mag-4-4.center) edge (mag-5-4.center);
  \draw[-, purple!30, thick] (mag-9-5.center) edge (mag-8-5.center);
  \draw[-, purple!30, thick] (mag-9-6.center) edge (mag-8-6.center);
   \draw[-, purple!30, thick] (mag-9-3.center) edge (mag-8-3.center);
    \draw[-, purple!30, thick] (mag-9-4.center) edge (mag-8-4.center);
 \draw[-latex, blue!30, thick] (mag-9-3) edge (mag-10-4);
 \draw[-latex, blue!30, thick] (mag-9-4) edge (mag-10-5);
 \draw[-latex, blue!30, thick] (mag-9-5) edge (mag-10-6);
 \draw[-latex, blue!30, thick] (mag-9-6) edge (mag-10-7);
  \draw[-latex, blue!30, thick] (mag-7-2) edge (mag-8-3);
 \draw[-latex, blue!30, thick] (mag-7-3) edge (mag-8-4);
 \draw[-latex, blue!30, thick] (mag-7-4) edge (mag-8-5);
 \draw[-latex, blue!30, thick] (mag-7-5) edge (mag-8-6);
 \draw[-latex, blue!30, thick] (mag-5-2) edge (mag-6-3);
  \draw[-latex, blue!30, thick] (mag-5-3) edge (mag-6-4);
 \draw[-latex, blue!30, thick] (mag-5-4) edge (mag-6-5);
 \draw[-latex, blue!30, thick] (mag-5-5) edge (mag-6-6);
 \draw[-, blue!30, thick] (mag-3-2.center) edge (mag-4-3.center);
    \draw[-latex, blue!30, thick] (mag-3-3) edge (mag-4-4);
\end{tikzpicture}
&
\begin{tikzpicture}
\matrix (mag) [matrix of math nodes,ampersand replacement=\&]
{
{\phantom{2} } \& 1 \& 2 \& 3 \& 4 \& 5 \& 6 \& 7 \& 8 \&  \&  \&  \&  \&  \&   \\
 9 \&  1_8 \&  \&  \&  \&  \&  \&  \&  \&   \&  \& \& \& \& \\
 8 \&       \& 1_7 \& 1_7 \&  \&  \&  \&  \&  \&   \&  \& \& \& \& \\
 7 \&       \&  \& 1_8  \&  1_6 \&  \&  \&  \&  \&   \&  \& \& \& \& \\
 6 \&       \& 1_6 \& 1_6 \& 1_6,1_7 \&  \&  \&  \&  \&  \&  \&  \&  \& \&  \\
 5 \&  1_4 \&  \& 1_5 \&  1_5,1_7  \& 2_5 \&  \&  \&  \&  \&  \&  \&  \& \&  \\
 4 \&       \& 1_3 \&  \&  1_5,1_6 \& 1_6 \&  \&  \&  \&  \&  \&  \&  \& \&  \\
 3 \&       \&  \& 1_4 \&  \& 1_4 \&  \&  \&  \&  \&  \&  \&  \&  \&\\
 2 \&       \& 1_2 \&  \&  \& 1_5  \&  \&  \&  \&  \&  \&  \&   \& \& \\
 1 \&       \&  \& 1_1 \&  \& 1_1 \& 1_4 \& 1_1 \& 1_1,1_4 \&  \&  \&   \& \& \& \\
};
\draw (mag-1-1.south) -- (mag-1-9.south);
\draw (mag-1-1.east) -- (mag-10-1.east);
\draw[-, purple!30, thick] (mag-3-3.center) edge (mag-5-3.center);
\draw[-, purple!30, thick] (mag-7-3.center) edge (mag-9-3.center);
\draw[-, purple!30, thick] (mag-6-4.center) edge (mag-8-4.center);
\draw[-, purple!30, thick] (mag-5-5.center) edge (mag-7-5.center);
\draw[-, purple!30, thick] (mag-3-4.center) edge (mag-5-4.center);
\draw[-, purple!30, thick] (mag-7-6.center) edge (mag-9-6.center);
\draw[-, purple!30, thick] (mag-6-6.center) edge (mag-8-6.center);
\draw[-latex, blue!50, thick] (mag-9-3) edge (mag-10-4);
\draw[-latex, blue!50, thick] (mag-8-4) edge (mag-10-6);
\draw[-latex, blue!50, thick] (mag-6-2) edge (mag-7-3);
\draw[-latex, blue!50, thick] (mag-5-3) edge (mag-6-4);
\draw[-latex, blue!50, thick] (mag-2-2) edge (mag-3-3);
\draw[-latex, blue!50, thick] (mag-5-4) edge (mag-6-5);
\draw[-latex, blue!50, thick] (mag-5-5) edge (mag-6-6);
\draw[-latex, blue!50, thick] (mag-9-6) edge (mag-10-7);
\draw[-latex, blue!50, thick] (mag-7-5) edge (mag-10-8);
\end{tikzpicture}
 \end{align*}
 \caption{\label{fig:cancellationo} 
  Computer generated table of the dimensions of the hairy graph cohomology $\text{dim}H(\HGC_{1,2})$ (left) and $\text{dim}H(\HGC_{3,3})$ (right).
  The rows indicate the number of hairs ($\uparrow$), the columns the genus ($\rightarrow$). A table entry $1_3$ means that there the degree 3 subspace is one-dimensional. 
  The arrows indicate (some of) the tentative cancellations of classes in the two spectral sequences discussed in section \ref{sec:waterfallo}. The computer program used approximate (floating point) arithmetic, so the displayed numbers should not be considered as rigorous results.
  }
 \end{figure}

\end{document}